\let\oldsqrt\sqrt
\def\sqrt{\mathpalette\DHLhksqrt}
\def\DHLhksqrt#1#2{%
\setbox0=\hbox{$#1\oldsqrt{#2\,}$}\dimen0=\ht0
\advance\dimen0-0.2\ht0
\setbox2=\hbox{\vrule height\ht0 depth -\dimen0}%
{\box0\lower0.4pt\box2}}
\newcommand{\R}{\mathbb{R}} 
\newcommand{\N}{\mathbb{N}} 
\newcommand{\supp}{\textnormal{supp}} 
\newcommand{\esssup}{\textnormal{esssup}} 
\renewcommand{\phi}{\varphi}
\newcommand{\cD}{{\mathcal D}}
\newcommand{\cE}{{\mathcal E}}
\newcommand{\cL}{{\mathcal L}}
\newcommand{\weakto}{\rightharpoonup}
\newcommand{\weak}{\rightharpoonup}
\newcommand{\eps}{\varepsilon}
\theoremstyle{definition}
\theoremstyle{plain} 
\newtheorem{defi}{Definition}[section]
\newtheorem{thm}[defi]{Theorem}
\newtheorem{prop}[defi]{Proposition}
\newtheorem{lemma}[defi]{Lemma}
\newtheorem{cor}[defi]{Corollary}
\newtheorem{remark}[defi]{Remark}
\theoremstyle{definition}
\numberwithin{equation}{section} 
\title{Local compactness and nonvanishing for weakly singular nonlocal quadratic forms}
\author{
 \ Sven Jarohs\footnote{Institut f\"ur Mathematik, Goethe-Universit\"at, Frankfurt, Robert-Mayer-Stra\ss e 10, D-60629 Frankfurt, jarohs@math.uni-frankfurt.de.}
 \;   and
 \!\ Tobias Weth\footnote{Institut f\"ur Mathematik, Goethe-Universit\"at, Frankfurt, Robert-Mayer-Stra\ss e 10, D-60629 Frankfurt, weth@math.uni-frankfurt.de.}}
\date{\today}
\begin{document}
\maketitle

\begin{abstract}
In this work we study a class of nonlocal quadratic forms given by 
\[
\cE_j(u,v)=\frac{1}{2}\int_{\R^N}\int_{\R^N}(u(x)-u(y))(v(x)-v(y))j(x-y)\ dxdy,
\]
where $j:\R^N\to[0,\infty]$ is a measurable even function with $\min\{1,|\cdot|^2\}j\in L^1(\R^N)$. Assuming merely $j\notin L^1(\R^N)$, we show local compactness of the embedding $\cD^j(\R^N)\hookrightarrow L^2(\R^N)$, where $\cD^j(\R^N)$ denotes the space of functions $u\in L^2(\R^N)$ with $\cE_j(u,u)<\infty$. Using this local compactness, we establish an alternative which allows to distinguish vanishing and nonvanishing of bounded sequences in $\cD^j(\R^N)$. As an application, we show the existence of maximizers for a class of integral functionals defined on the unit sphere in $\cD^j(\R^N)$. Our main results extend to cylindrical unbounded sets of the type $\Omega = U \times \R^k$, where $U \subset \R^{N-k}$ is open and bounded. Finally, we note that a Poincar\'e inequality associated with $\cE_j$ holds for unbounded domains of this type, thereby extending the corresponding result in \cite{FKV13} for bounded domains.
\end{abstract}
{\footnotesize
\begin{center}
\end{center}
\begin{center}
%
\end{center}
}

\section{Introduction}\label{sec:introduction}

The present paper is devoted to quadratic forms and function spaces associated with unbounded nonlocal operators $I$ on $L^2(\R^N)$ formally given by 
\begin{equation}
  \label{eq:def-Iu-formal}
Iu(x)=P.V.\int_{\R^N}(u(x)-u(y))j(x-y)\ dy : = \lim_{\eps \to 0^+} 
\int_{\R \setminus B_{\eps}(x)}(u(x)-u(y))j(x-y)\ dy.
\end{equation}
Here $j: \R^N \to [0,\infty]$ is the associated (nonnegative) kernel function, which typically has a singularity at the origin. In recent years, operators of this type have received increasing attention, whereas most of the work has been devoted to the case $j(z)=|z|^{-N-\alpha}$ with $\alpha \in (0,2)$. In this case, $I$ equals, up to a multiple constant, the fractional Laplacian of order $\alpha$, see e.g. \cite{BV15} and the references therein.\\
In the present paper, we wish to derive some useful tools for the study of quadratic forms associated to a very general class of operators of type (\ref{eq:def-Iu-formal}) without any restriction of the order. More precisely, we assume that $j: \R^N \to [0,\infty]$ satisfies 
\begin{equation*}\label{j1}
(A1) \qquad j(z)=j(-z)\quad \text{ for all $z\in \R^N\ $ and}\quad  0< \int_{\R^N}\min\{1,|z|^2\}j(z)\ dz<\infty.
\end{equation*}
If $(A1)$ holds, then $Iu \in C(\R^N)$ is well-defined on $\R^N$ by (\ref{eq:def-Iu-formal}) for 
functions $u \in C^2_c(\R^N)$. Moreover, we have 
$$
\langle Iu,v \rangle = \cE_j(u,v) \qquad \text{for all $u,v \in C^2_c(\R^N)$} 
$$
with the associated bilinear form 
\begin{equation}
\label{bilinear}
(u,v) \mapsto \cE_j(u,v)=\frac{1}{2}\int_{\R^N}\int_{\R^N} (u(x)-u(y))(v(x)-v(y))j(x-y)\ dxdy.
\end{equation}
Note that $\cE_j$ is well-defined on the space 
\begin{equation}\label{D-j-def-r-n}
\cD^j(\R^N):=\{u\in L^2(\R^N)\;:\; \int_{\R^N}\int_{\R^N} (u(x)-u(y))^2j(x-y)\ dxdy <\infty\}.
\end{equation}
More generally, for an open set $\Omega \subset \R^N$, we define 
\begin{equation}\label{D-1-def-omega}
\cD^j(\Omega):=\{u\in \cD^j(\R^N)\;:\;  u \equiv 0 \;\text{on $\R^N \setminus \Omega$}\}.
\end{equation}
It is known and not difficult to see that $\cD^j(\Omega)$ is a Hilbert space with scalar product $\langle \cdot,\cdot \rangle$ given by 
$$
\langle u,v \rangle = \cE_j(u,v) + \langle u,v \rangle_{L^2(\R^N)}, 
$$
and corresponding norm $\|\cdot\|$, see \cite{JW14,FKV13}. Moreover, $\cD^j(\Omega) \subset L^2(\Omega)$ is dense, since it contains the space of $C^1$-functions with compact support in $\Omega$. Here and in the following, we identify $L^2(\Omega)$ with the space of functions in $L^2(\R^N)$ with $u \equiv 0$ on $\R^N \setminus \Omega$. It thus follows that $\cE_j$ is the quadratic form of a unique self-adjoint operator $I$ in $L^2(\Omega)$. Moreover, $C_c^2(\Omega)$ is contained in the domain of $I$, and for every $v \in C_c^2(\Omega)$ the function $Iv \in  L^2(\Omega)$ is a.e. given by (\ref{eq:def-Iu-formal}). For proofs of these statements, see e.g. \cite[Section 2]{JW14}.

The first main purpose of the present paper is to study (local) compactness properties of the embedding $\cD^j(\R^N) \hookrightarrow L^2(\R^N)$. In the following, for a measurable subset $K \subset \R^N$, we let $1_K$ denote the characteristic function of $K$ and 
$$
R_K: L^2(\R^N) \to L^2(\R^N),\qquad R_K u = 1_k \, u
$$
the corresponding multiplication operator with $1_K$. Moreover, if $E$ is a normed vector space, we call a continuous linear operator $T: E \to L^2(\R^N)$ {\em locally compact} if $R_K T: E \to L^2(\R^N)$ is a compact operator for every compact subset $K \subset \R^N$. 

We note that, if the embedding $\cD^j(\R^N) \hookrightarrow L^2(\R^N)$ is locally compact, then the embedding $\cD^j(\Omega) \hookrightarrow L^2(\Omega)$ is compact for every bounded open set $\Omega \subset \R^N$. A necessary condition for the local compactness of the embedding $\cD^j(\R^N) \hookrightarrow L^2(\R^N)$ is the following:
\begin{equation*}
(A2) \qquad \int_{\R^N}j(z)\ dz=\infty.
\end{equation*}
Indeed, if on the contrary $j \in L^1(\R^N)$, then the spaces $\cD^j(\R^N)$ und $L^2(\R^N)$ coincide
 with equivalent norms, since 
 $$
\|w\|_{L^2(\R^N)}\leq \|w \|\leq (1+2\|j\|_{L^1(\R^N)})^{1/2} \|w\|_{L^2(\R^N)}
\qquad \text{for every $w \in L^2(\R^N)$.}
$$
Consequently, local compactness fails in this case. Assumption $(A2)$ should thus be regarded as the weakest possible singularity condition on the kernel under which local compactness might be expected. In recent years, there has been an increasing interest in nonlocal operators with weakly singular kernels as they correspond to non-fractional orders near zero, see e.g. \cite{JW17,KM17,CP17,CW18} and the references therein.

In our first main result, we shall see that $(A2)$ is indeed also a sufficient condition for local compactness.

\begin{thm}
\label{sec:local-compactness-2}
Suppose that $j$ satisfies $(A1)$ and $(A2)$. Then the embedding $\cD^j(\R^N) \hookrightarrow L^2(\R^N)$ is locally compact.
\end{thm}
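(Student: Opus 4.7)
The plan is to combine a Fourier-side representation of $\cE_j$ with a standard mollification argument. For $u\in L^{2}(\R^N)$, expanding $\|u(\cdot+z)-u\|_{L^{2}}^{2}=2\int(1-\cos(\xi\cdot z))|\hat u(\xi)|^{2}\,d\xi$ via Plancherel and then applying Tonelli to the defining double integral of $\cE_j$ yields
\[
\cE_j(u,u)=\int_{\R^N}\psi(\xi)|\hat u(\xi)|^{2}\,d\xi,\qquad \psi(\xi):=\int_{\R^N}(1-\cos(\xi\cdot z))\,j(z)\,dz,
\]
so that the graph norm on $\cD^{j}(\R^N)$ corresponds to the Fourier multiplier $1+\psi$.

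The heart of the argument is the coercivity claim that $\psi(\xi)\to\infty$ as $|\xi|\to\infty$. Fix $\delta>0$. Since $\min(1,|z|^{2})\,j\in L^{1}(\R^N)$ by $(A1)$ and $\min(1,|z|^{2})\ge\min(1,\delta^{2})$ on $\{|z|\ge\delta\}$, the truncation $j_{\delta}:=j\,1_{\{|z|\ge\delta\}}$ lies in $L^{1}(\R^N)$. Evenness of $j$ gives $\widehat{j_{\delta}}(\xi)=\int\cos(\xi\cdot z)j_{\delta}(z)\,dz$, and the Riemann--Lebesgue lemma yields $\widehat{j_{\delta}}(\xi)\to 0$ as $|\xi|\to\infty$. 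Dropping the contribution of $\{|z|<\delta\}$ in the definition of $\psi$ gives $\psi(\xi)\ge\|j_{\delta}\|_{L^{1}}-\widehat{j_{\delta}}(\xi)$, so $\liminf_{|\xi|\to\infty}\psi(\xi)\ge\|j_{\delta}\|_{L^{1}}$. Moreover $(A1)$ and $(A2)$ together force $\int_{|z|<\delta}j\,dz=\infty$ for every $\delta>0$ (otherwise one would have $\int j<\infty$), hence monotone convergence gives $\|j_{\delta}\|_{L^{1}}\to\infty$ as $\delta\to 0^{+}$. This step, which is where $(A2)$ genuinely enters, is the main obstacle.

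With the coercivity of $\psi$ in hand, pick an even $\rho\in C_{c}^{\infty}(\R^N)$ with $\int\rho=1$ and set $\rho_{\eps}(z)=\eps^{-N}\rho(z/\eps)$. Plancherel together with the elementary bound $|1-\hat\rho(\eps\xi)|\le C\min(1,\eps|\xi|)$, after splitting the frequency integral at $|\xi|=R$, produces
\[
\|u-\rho_{\eps}*u\|_{L^{2}}^{2}\;\le\; C\eps^{2}R^{2}\,\|u\|_{L^{2}}^{2}\;+\;C\Bigl(\sup_{|\xi|\ge R}\tfrac{1}{\psi(\xi)}\Bigr)\,\cE_{j}(u,u).
\]
First letting $R\to\infty$ and then $\eps\to 0^{+}$ shows $\sup_{n}\|u_{n}-\rho_{\eps}*u_{n}\|_{L^{2}}\to 0$ for every bounded sequence $(u_{n})\subset\cD^{j}(\R^N)$.

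To close the proof, given such a sequence $(u_n)$, pass to a subsequence with $u_n\weakto u$ in $\cD^{j}(\R^N)$ and hence in $L^{2}(\R^N)$, and set $v_n:=u_n-u$. For each fixed $\eps>0$ the functions $\rho_{\eps}*v_n$ are uniformly bounded in $L^{\infty}(\R^N)$ by $\|\rho_{\eps}\|_{L^{2}}\|v_n\|_{L^{2}}$, and $(\rho_{\eps}*v_n)(x)=\langle v_n,\rho_{\eps}(x-\cdot)\rangle_{L^{2}}\to 0$ pointwise in $x$ by weak $L^{2}$-convergence. Dominated convergence on any compact $K\subset\R^N$ then gives $\rho_{\eps}*v_n\to 0$ in $L^{2}(K)$; combining with the uniform mollifier estimate via the triangle inequality yields $1_{K}u_n\to 1_{K}u$ in $L^{2}(\R^N)$, which is precisely the required local compactness.
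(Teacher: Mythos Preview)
Your proof is correct and proceeds by a genuinely different route than the paper's. You work on the Fourier side, showing that the symbol $\psi(\xi)=\int(1-\cos(\xi\cdot z))j(z)\,dz$ diverges at infinity (via Riemann--Lebesgue applied to the truncated kernel $j_\delta$ and then $\|j_\delta\|_{L^1}\to\infty$), and then use this coercivity to control $\|u-\rho_\eps*u\|_{L^2}$ for a standard mollifier $\rho_\eps$. The paper instead avoids the Fourier transform entirely: it convolves $u$ with the \emph{normalized truncated kernel itself}, $w_\delta=j_\delta/\|j_\delta\|_{L^1}$, and obtains the approximation estimate $\|u-w_\delta*u\|_{L^2}\le(2/\|j_\delta\|_{L^1})^{1/2}\|u\|$ directly from Jensen's inequality, with no symbol calculus or Riemann--Lebesgue. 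Both arguments hinge on the same fact $\|j_\delta\|_{L^1}\to\infty$, and both reduce to the local compactness of convolution by an $L^1$ function (you get this via weak convergence and dominated convergence on compacta; the paper via Arzel\`a--Ascoli and $L^1$-approximation by smooth kernels). What the paper's choice buys is a fully elementary, Fourier-free argument---which is one of its stated aims, contrasting with prior Fourier-based proofs requiring extra structure on $j$---while your argument makes the spectral picture (coercivity of the multiplier) explicit and is perhaps more familiar in style. A small remark on presentation: the phrase ``first letting $R\to\infty$ and then $\eps\to0^+$'' is slightly misleading since the left-hand side does not depend on $R$; what you mean (and what works) is that for a given tolerance one first fixes $R$ large and then takes $\eps$ small depending on $R$.
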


As noted already, Theorem~\ref{sec:local-compactness-2} implies that the embedding $\cD^j(\Omega) \hookrightarrow L^2(\Omega)$ is compact for every bounded open set $\Omega \subset \R^N$. The latter result has been shown in \cite[Theorem 2.1]{CP17} under the assumption that $j$ is a {\em radially symmetric} kernel satisfying assumptions $(A1)$, $(A2)$ and such that $j$ is positive and slowly varying a neighborhood of the origin (see conditions (H1) and (H2) in \cite{CP17}). These additional restrictions are used in the proof in \cite{CP17} which is based on pointwise estimates for the Fourier symbol of the operator $I$ as derived in \cite[Proposition 6]{KM17}.

To deal with nonradial kernels and without additional assumptions on $j$, 
we apply a completely different and surprisingly simple argument based on weighted averages, where a cut-off of the kernel $j$ is used as a weight function. This also provides an alternative simple proof in the classical case where $j(z)=|z|^{-N-\alpha}$ with $\alpha \in (0,2)$, which corresponds to the fractional Laplacian. 

Our next result extends the compactness statement for the embedding $\cD^j(\Omega) \hookrightarrow L^2(\Omega)$ to (possibly unbounded) sets of finite measure.

\begin{thm}
  \label{thm-cor-sec:local-compactness-5}
Suppose that $j$ satisfies $(A1)$ and $(A2)$, and let $\Omega \subset \R^N$ be an open set with $|\Omega|<\infty$. Then the embedding $\cD^j(\Omega) \hookrightarrow L^2(\Omega)$ is compact. 
\end{thm}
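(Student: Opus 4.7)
The plan is to combine the local compactness from Theorem~\ref{sec:local-compactness-2} with a tightness estimate that exploits the finite measure of $\Omega$. Let $(u_n)\subset \cD^j(\Omega)$ be bounded. Theorem~\ref{sec:local-compactness-2}, together with a standard diagonal extraction over the balls $B_R(0)$, furnishes a subsequence (still denoted $u_n$) and some $u\in L^2(\R^N)$ with $u\equiv 0$ on $\R^N\setminus\Omega$ such that $u_n\to u$ in $L^2(B_R(0))$ for every $R>0$. To upgrade this to convergence in $L^2(\Omega)$, it suffices to establish the uniform tightness
\[
\lim_{R\to\infty}\;\sup_{n}\|u_n\|_{L^2(\Omega\setminus B_R(0))} \;=\; 0,
\]
since $\|u\|_{L^2(\Omega\setminus B_R)}\to 0$ follows from $u\in L^2(\R^N)$.

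The tightness rests on the Hardy-type lower bound
\[
\cE_j(u_n,u_n) \;\geq\; \int_\Omega u_n(x)^2\, J(x)\, dx, \qquad \text{where}\quad J(x):=\int_{\R^N\setminus\Omega} j(x-y)\,dy,
\]
which I obtain by restricting the double integral defining $\cE_j(u_n,u_n)$ to $\Omega\times(\R^N\setminus\Omega)$ (together with its mirror image, using the symmetry of $j$) and invoking $u_n\equiv 0$ outside $\Omega$. Once this is in hand, it is enough to show $\inf_{x\in\Omega\setminus B_R(0)}J(x)\to\infty$ as $R\to\infty$, for then the bound $\|u_n\|_{L^2(\Omega\setminus B_R)}^2 \leq (\inf_{\Omega\setminus B_R}J)^{-1}\cE_j(u_n,u_n)$ delivers tightness uniformly in $n$.

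The real work, and the main obstacle, is thus to prove $J(x)\to\infty$ as $|x|\to\infty$ with $x\in\Omega$. For any $r, M>0$, the substitution $z=x-y$ and the symmetry of $j$ give
\[
J(x) \;\geq\; \int_{B_r(0)\setminus(\Omega-x)} j(z)\,dz \;\geq\; \int_{B_r(0)} \min\{j(z),M\}\,dz \;-\; M\,|B_r(x)\cap\Omega|.
\]
Since $|\Omega|<\infty$, one has $|B_r(x)\cap\Omega|\leq|\Omega\setminus B_{|x|-r}(0)|\to 0$ as $|x|\to\infty$ for fixed $r$. At the same time, $(A1)$ forces $\int_{|z|\geq 1}j\,dz<\infty$, so by $(A2)$ the integral $\int_{B_1(0)}j$ is infinite, and monotone convergence yields $\int_{B_r(0)}\min\{j,M\}\to\int_{\R^N}j=\infty$ as $M, r\to\infty$. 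Given any threshold $M_0$, I first pick $r$ and $M$ so that $\int_{B_r(0)}\min\{j,M\}\geq 2M_0$ and then choose $|x|$ so large that $M|B_r(x)\cap\Omega|\leq M_0$, yielding $J(x)\geq M_0$. The subtle point is precisely that one cannot integrate $j$ itself against $1_{B_r(0)\setminus(\Omega-x)}$, because $\int_{B_r(0)}j$ may be infinite for every $r>0$; truncation at height $M$ is what decouples the singularity of $j$ at the origin from the volume estimate on $\Omega\cap B_r(x)$.
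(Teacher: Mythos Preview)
Your argument is correct, and it takes a genuinely different route from the paper's.

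The paper does not argue via spatial tightness. Instead it proves the stronger statement (Theorem~\ref{sec:local-compactness-4}) that $R_\Omega:\cD^j(\R^N)\to L^2(\R^N)$ is compact, by truncating in the \emph{function values} rather than in space: for $t>0$ it uses the projection $P_t$ onto $\{|u|\le t\}$, controls $\|u-P_tu\|_{L^2}$ via the killing measure of the superlevel set $\{|u|>t\}$ and the uniform rearrangement bound $\kappa_\Omega(x)\ge\kappa(|\Omega|)$ of Lemma~\ref{sec:local-compactness-1-0}, and controls $\|R_{\Omega\setminus K}P_tu\|_{L^2}$ by $t\,|\Omega\setminus K|^{1/2}$ for a suitably large compact $K\subset\Omega$. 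Your approach instead exploits that, for $u\in\cD^j(\Omega)$ specifically, the killing measure $J(x)=\kappa_\Omega(x)$ tends to infinity as $|x|\to\infty$ along $\Omega$; this is a different property of $\kappa_\Omega$ than the uniform lower bound $\kappa(|\Omega|)$, and your truncation trick $\min\{j,M\}$ is a clean way to extract it without any rearrangement machinery. The trade-off is that your method relies on $u$ vanishing outside $\Omega$ and therefore does not immediately give the stronger operator statement $R_\Omega:\cD^j(\R^N)\to L^2$ compact, whereas the paper's $P_t$-argument works for arbitrary $u\in\cD^j(\R^N)$ and its key Lemma~\ref{sec:local-compactness-1-0} is reused later in the proof of Theorem~\ref{sec:vanishing-nonvanishing}. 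For the statement as written, however, your proof is complete and arguably more elementary.
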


Theorem~\ref{thm-cor-sec:local-compactness-5} will be deduced from Theorem~\ref{sec:local-compactness-2} and from additional estimates for the killing measure associated with $j$ and for projections on subsets of $L^\infty$-bounded functions.
Under the assumptions of Theorem~\ref{thm-cor-sec:local-compactness-5}, it follows in a standard way that the associated selfadjoint operator $I$ in $L^2(\Omega)$ defined above admits a sequence of eigenvalues
$$
0 < \lambda_1(\Omega)\le \lambda_2(\Omega)\le \cdots\le \lambda_k(\Omega)\le \lambda_{k+1}(\Omega)\le \cdots
$$
with $\lim \limits_{k \to \infty}\lambda_k(\Omega) = \infty$ and an orthonormal basis of $\cD^j(\Omega)$ of associated eigenfunctions $\xi_k$, $k \in \N$.

Compactness of the embedding $\cD^j(\Omega) \hookrightarrow L^2(\Omega)$ 
fails in general if $\Omega$ has infinite measure. An obvious indication for this failure is the translation invariance of the quadratic form $\cE_j$. Our next theorem distinguishes vanishing and nonvanishing properties of bounded sequences with respect to translations. In the following, for $u \in \cD^j(\R^N)$ and $x \in \R^N$, we define 
 $x* u \in \cD^j(\R^N)$ by $[x*u](y)=u(y-x)$.

\begin{thm}
\label{sec:vanishing-nonvanishing}
Suppose that $j$ satisfies $(A1)$ and $(A2)$, and let $(u_n)_n \subset \cD^j(\R^N)$ be a bounded sequence. Then one of the following alternatives holds.
\begin{enumerate}
\item[(i)] $$
\lim \limits_{n \to \infty} \int_{|u_n| \ge \eps}u_n^2\,dx = 0\qquad \text{for every $\eps>0$.}
$$
\item[(ii)] There exists a sequence $(x_n)_n \subset \R^N$ such that, after passing to a subsequence, we have  
$$
x_n * u_n \weakto u \not = 0 \qquad \text{in $\cD^j(\R^N)$.}
$$
\end{enumerate}
\end{thm}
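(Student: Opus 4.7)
My plan is to prove the contrapositive: assume (ii) fails and deduce (i). Since $\cD^j(\R^N)$ is a Hilbert space, failure of (ii) means that for every sequence $(x_n)\subset\R^N$, every weakly convergent subsequence of $x_n*u_n$ has limit $0$. Combined with Theorem~\ref{sec:local-compactness-2}, this yields the no-concentration property
\[
\sup_{y\in\R^N}\int_{B_R(y)}u_n^2\,dx\to 0\qquad\text{for every }R>0.
\]
Indeed, if this failed along a subsequence, one could select points $y_n$ with $\int_{B_R(y_n)}u_n^2\,dx\ge c>0$; the translated sequence $x_n*u_n$ with $x_n:=-y_n$ is bounded in $\cD^j(\R^N)$, so a subsequence would converge weakly to some $u\in\cD^j(\R^N)$ and strongly in $L^2(\overline{B_R})$ by Theorem~\ref{sec:local-compactness-2}, yielding $\int_{B_R}u^2\,dx\ge c$ and $u\ne 0$, a contradiction.

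For fixed $\eps>0$ and $A_n:=\{|u_n|\ge\eps\}$, my next step introduces the Lipschitz truncation $w_n:=(|u_n|-\eps/2)_+$, which lies in $\cD^j(\R^N)$ with $\cE_j(w_n,w_n)\le\cE_j(u_n,u_n)\le C$ (by the chain rule for nonlocal Dirichlet forms applied to the $1$-Lipschitz map $t\mapsto(|t|-\eps/2)_+$), whose support lies in $\{|u_n|>\eps/2\}$ of measure at most $4\|u_n\|_{L^2}^2/\eps^2$, and which inherits the no-concentration property from $|w_n|\le|u_n|$. On $A_n$ the inequality $u_n^2=(w_n+\eps/2)^2\le 2w_n^2+\eps^2/2$ together with $|A_n|\le\eps^{-2}\int_{A_n}u_n^2\,dx$ will yield $\int_{A_n}u_n^2\,dx\le 4\|w_n\|_{L^2(\R^N)}^2$, reducing the problem to showing $\|w_n\|_{L^2(\R^N)}\to 0$.

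The main obstacle is this last reduction. I plan to partition $\R^N$ into cubes $Q_k^h=hk+[0,h)^N$ of side $h>0$ and employ a local Poincar\'e inequality
\[
\|u-(u)_{Q_k^h}\|_{L^2(Q_k^h)}^2\le C_*(h)\int_{Q_k^h}\!\int_{Q_k^h}(u(x)-u(y))^2 j(x-y)\,dx\,dy,
\]
where $C_*(h)$ is independent of $k$ by translation invariance of $j$ and obtainable via a standard contradiction argument based on Theorem~\ref{thm-cor-sec:local-compactness-5} applied to the unit cube. The cubewise orthogonal decomposition $\|w_n\|_{L^2(\R^N)}^2=\sum_k\|w_n-(w_n)_{Q_k^h}\|^2_{L^2(Q_k^h)}+\sum_k(w_n)_{Q_k^h}^2|Q_k^h|$, the summability $\sum_k\int_{Q_k^h}\!\int_{Q_k^h}(w_n(x)-w_n(y))^2 j(x-y)\,dx\,dy\le 2\cE_j(w_n,w_n)\le 2C$, and the Cauchy--Schwarz estimate $(w_n)_{Q_k^h}^2|Q_k^h|\le M_n^h|Q_k^h\cap\supp(w_n)|/|Q_k^h|$ summed to give $h^{-N}M_n^h|\supp(w_n)|$ (where $M_n^h:=\sup_k\int_{Q_k^h}w_n^2\,dx\to 0$ by no-concentration) will combine to yield
\[
\|w_n\|_{L^2(\R^N)}^2\le 2C_*(h)C+\frac{4CM_n^h}{\eps^2 h^N}.
\]
Sending first $n\to\infty$ and then $h\to 0$ will complete the proof provided $C_*(h)\to 0$ as $h\to 0$; this scaling property of the local Poincar\'e constant, which I expect to extract from the non-integrability of $j$ at the origin (assumption $(A2)$) via analysis of the rescaled kernel $h^Nj(h\cdot)$ on the unit cube, is the most delicate step of the argument.
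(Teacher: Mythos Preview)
Your reduction up to the claim $\|w_n\|_{L^2}\to 0$ is sound: failure of (ii) together with Theorem~\ref{sec:local-compactness-2} indeed gives the no-concentration property $\sup_y\int_{B_R(y)}u_n^2\,dx\to 0$, and the passage to $w_n=(|u_n|-\eps/2)_+$ works as you describe. The gap is in the final step. First, Theorem~\ref{thm-cor-sec:local-compactness-5} concerns the space $\cD^j(\Omega)$ of functions vanishing on $\R^N\setminus\Omega$, not the regional space on a cube; the contradiction argument you invoke would require compactness of the latter embedding, which the paper does not supply. More importantly, the scaling claim $C_*(h)\to 0$ is \emph{false} under (A1)--(A2) alone. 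Take $N=2$ and $j(z)=|z|^{-3}\,1_{\{|z_2|\le z_1^2,\;|z|\le 1\}}$, which is even and satisfies (A1) and (A2). On $Q_h=[0,h]^2$ test with $u(x)=x_2$: then $\|u-(u)_{Q_h}\|_{L^2(Q_h)}^2=h^4/12$, while after the substitution $z=x-y$ one finds
\[
\int_{Q_h}\!\int_{Q_h}(x_2-y_2)^2\, j(x-y)\,dx\,dy\;\le\; h^2\!\int_{|z_1|\le h}|z_1|^{-3}\!\int_{|z_2|\le z_1^2}\! z_2^2\,dz_2\,dz_1\;=\;\frac{h^6}{3},
\]
so $C_*(h)\ge \tfrac{1}{4}h^{-2}\to\infty$. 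The anisotropy of $j$ near the origin, which (A1)--(A2) do not exclude, defeats any Neumann-type Poincar\'e control on shrinking cubes; non-integrability of $j$ alone cannot repair this.

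The paper's proof bypasses cubewise Poincar\'e entirely. It argues in the opposite direction (assume (i) fails) and selects the translation points via the killing measure $\kappa_{\Omega_n}(x)=\int_{\R^N\setminus\Omega_n}j(x-y)\,dy$ for $\Omega_n=\{|u_n|>\eps\}$: by Lemma~\ref{sec:local-compactness-1-0-0}, the lower bound $\|u_n-P_\eps u_n\|_{L^2}^2\ge\delta/4$ forces $\inf_{\Omega_n}\kappa_{\Omega_n}$ to stay bounded, so one picks $x_n\in\Omega_n$ near this infimum. After translation and passing to a subsequence with weak limit $u$ (and a.e.\ limit, by Theorem~\ref{sec:local-compactness-2}), if $u=0$ then $1_{\{|x_n*u_n|<\eps\}}\to 1$ a.e., and Fatou's lemma gives $\liminf_n\kappa_{\Omega_n}(x_n)\ge\int_{\R^N}j=\infty$, a contradiction. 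This mechanism uses only the global non-integrability of $j$ and is insensitive to its anisotropy.
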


Somewhat similarly as Theorem~\ref{thm-cor-sec:local-compactness-5}, Theorem~\ref{sec:vanishing-nonvanishing} will also be deduced from Theorem~\ref{sec:local-compactness-2} and from estimates related to the killing measure associated with $j$.
Theorem~\ref{sec:vanishing-nonvanishing} should be compared with a classical result of Lions which states that a bounded sequence $(u_n)_n$ in the Sobolev space $H^1(\R^N)$ converges to zero in $L^p(\R^N)$ for $2<p<2^*$  if it satisfies 
\begin{equation}
  \label{eq:lions-vanishing}
\lim_{n \to \infty} \,\sup_{y \in \R^N} \int_{B_r(y)}|u_n|^2\,dx = 0 \qquad \text{for some $r>0$}
\end{equation}
(see \cite[Lemma I.1]{L84} or \cite[Lemma 1.21]{Willem}). Here $2^*$ denotes the first order critical Sobolev exponent, i.e., $2^* =\frac{2N}{N-2}$ for $N \ge 3$ and $2^*= \infty$ for $N=1,2$. 
The proof of Lions' result strongly relies on the embedding of $H^1(\R^N)$ into $L^p(\R^N)$ for $2 \le p < 2^*$. Consequently, the argument does not extend to the setting of Theorem~\ref{sec:vanishing-nonvanishing} since -- under the present assumptions -- the space $\cD^j(\R^N)$ might not embed into any space $L^p(\R^N)$ with $p>2$.

Theorem~\ref{sec:vanishing-nonvanishing} is useful in the study of certain classes of variational problems. As an application, we consider the maximization problem associated with 
\begin{equation}
  \label{eq:def-m-F-intro}
m_{F,\R^N}:= \sup_{u \in S } \Phi(u),
\end{equation}
where $S(\R^N):= \{u \in \cD^j(\R^N) \::\: \|u\| =1\}$ is the unit sphere in $\cD^j(\R^N)$ and 
$$
\Phi: L^2(\R^N) \to \R, \qquad \Phi(u)= \int_{\R^N}F(u)\,dx
$$
is an integral functional associated with a continuous function $F:\R \to \R$ satisfying the following assumptions. 
\begin{itemize}
\item[(F1)] $F(t) \le {c_\infty \,} t^2$ for every $t \in \R$ with some constant ${c_\infty \,}>0$; 
\item[(F2)] $F(t)=o(t^2)$ as $t \to 0$; 
\item[(F3)] $\frac{F(t)}{t^2}$ is nondecreasing on $[0,\infty)$ and nonincreasing on $(-\infty,0]$. 
\end{itemize}
Typical examples for functions $F$ satisfying these assumptions are $t \mapsto \frac{t^4}{1+t^2}$ or $t \mapsto  \int_0^t \frac{s^3}{1+s^2}\,ds$. Assumptions (F2) and (F3) yield that $F$ is nonnegative. Assumption (F1) implies that $\Phi$ is bounded on bounded subsets of $L^2(\R^N)$. Since $F$ is moreover continuous, a classical argument shows that $\Phi$ is continuous on $L^2(\R^N)$. It also follows from (F1) that $m_{F,\R^N}<\infty$. Moreover, $m_{F,\R^N}>0$ if and only if $F \not \equiv 0$.  We have the following result.
\begin{thm}
\label{max-value-attained}
Assume that $j$ satisfies $(A1)$ and $(A2)$, and that $F\in C(\R)$ satisfies (F1)--(F3). Then the maximal value $m_{F,\R^N}$ is attained, i.e., there exists $u \in S(\R^N)$ with $\Phi(u)=m_{F,\R^N}$.
\end{thm}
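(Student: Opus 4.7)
Since (F2)--(F3) imply $F\ge 0$, the case $m_{F,\R^N}=0$ is trivial (every element of $S(\R^N)$ is then a maximizer), so I will assume $m_{F,\R^N}>0$. The plan is a standard concentration-compactness argument based on Theorem~\ref{sec:vanishing-nonvanishing}, the local compactness provided by Theorem~\ref{sec:local-compactness-2}, and the $2$-subhomogeneity of $\Phi$ encoded in~(F3).

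First, I take a maximizing sequence $(u_n)_n \subset S(\R^N)$ and apply Theorem~\ref{sec:vanishing-nonvanishing}. To rule out the vanishing alternative~(i), for any $\eta>0$ I pick $\delta>0$ via (F2) so that $F(t)\le \eta t^2$ for $|t|\le \delta$, and combine with~(F1) to get
\[
\Phi(u_n)\le \eta\|u_n\|_{L^2(\R^N)}^2 + c_\infty \int_{|u_n|\ge \delta} u_n^2\,dx \le \eta + c_\infty \int_{|u_n|\ge \delta} u_n^2\,dx.
\]
Under alternative~(i) the last integral vanishes, so $\limsup_n \Phi(u_n)\le \eta$ for every $\eta>0$, contradicting $m_{F,\R^N}>0$. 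Therefore alternative~(ii) must hold, yielding $(x_n)_n\subset\R^N$ so that, after passing to a subsequence, $v_n := x_n * u_n \weakto v \ne 0$ in $\cD^j(\R^N)$. Translation invariance of $\cE_j$ and of $L^2(\R^N)$ gives $\|v_n\|=1$ and $\Phi(v_n)\to m_{F,\R^N}$.

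Next, Theorem~\ref{sec:local-compactness-2} gives $v_n\to v$ in $L^2_{\mathrm{loc}}(\R^N)$, so after a further subsequence $v_n\to v$ almost everywhere. Setting $w_n := v_n - v$, I expect a Brezis--Lieb type splitting $\Phi(v_n) = \Phi(v) + \Phi(w_n) + o(1)$ to hold. Moreover, since $w_n\weakto 0$ in the Hilbert space $\cD^j(\R^N)$ and $\|v_n\|=1$, expanding $\|v_n\|^2 = \|v\|^2 + 2\langle v,w_n\rangle + \|w_n\|^2$ and using $\langle v,w_n\rangle\to 0$ yields $\|w_n\|^2 \to 1-\|v\|^2 < 1$; in particular $\|w_n\|<1$ eventually. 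Condition (F3) implies $F(\lambda t)\le \lambda^2 F(t)$ for every $t\in\R$ and $\lambda\in [0,1]$, so integrating, any $u\in \cD^j(\R^N)$ with $0<\|u\|\le 1$ satisfies $\Phi(u)\le \|u\|^2 \Phi(u/\|u\|)\le \|u\|^2 m_{F,\R^N}$. Applying this to $v$ and to $w_n$ in the splitting and passing to the limit,
\[
m_{F,\R^N} \le \|v\|^2\,\Phi(v/\|v\|) + (1-\|v\|^2)\,m_{F,\R^N},
\]
and since $\|v\|^2>0$ this forces $\Phi(v/\|v\|)\ge m_{F,\R^N}$; the reverse inequality is automatic, so $v/\|v\|\in S(\R^N)$ is the desired maximizer.

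The main obstacle will be the Brezis--Lieb splitting: under (F1)--(F2) alone one does not immediately have the classical pointwise inequality $|F(a+b)-F(a)-F(b)|\le \eps a^2 + C_\eps b^2$. I plan to establish the splitting by decomposing the integrals over a ball $B_R$ (where local $L^2$-convergence together with the domination $F(v_n)\le c_\infty v_n^2$ produces uniform integrability, so Vitali's theorem applies) and over its complement (where $F(v)\le c_\infty v^2$ combined with $\|v\|_{L^2(\R^N\setminus B_R)}\to 0$ as $R\to\infty$ controls the tail), and then sending $R\to\infty$. Everything else is an essentially routine application of the two theorems already proved and of the monotonicity condition (F3).
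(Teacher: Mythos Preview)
Your overall strategy --- rule out vanishing via Theorem~\ref{sec:vanishing-nonvanishing}, translate to get a nonzero weak limit, then use a splitting together with the sub\-homogeneity $\Phi(u)\le m_F\|u\|^2$ from (F3) --- is exactly the paper's. The difference lies in how the splitting is carried out, and there your sketch has a gap.

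You set $w_n=v_n-v$ and aim for the Brezis--Lieb identity $\Phi(v_n)=\Phi(v)+\Phi(w_n)+o(1)$. On $B_R$ your Vitali argument is fine. But on $\R^N\setminus B_R$ you only propose to control $\int_{\R^N\setminus B_R}F(v)$ via the $L^2$-tail of $v$; this says nothing about the remaining term
\[
\int_{\R^N\setminus B_R}\bigl[F(v_n)-F(w_n)\bigr]=\int_{\R^N\setminus B_R}\bigl[F(w_n+v)-F(w_n)\bigr],
\]
and under (F1)--(F3) alone there is no obvious domination for $|F(a+b)-F(a)|$ in terms of $b$ only. This term \emph{can} be handled, but it requires proving the pointwise estimate $|F(a+b)-F(a)|\le \eps a^2+C_\eps b^2$, which in the delicate case $a>0$, $-a<b<0$ uses that $g(t):=F(t)/t^2$ is bounded, monotone and hence uniformly continuous on $[0,\infty)$; this is a genuine additional step you have not indicated.

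The paper sidesteps Brezis--Lieb entirely. It argues by contradiction: assuming $\Phi(\psi)<m_F\|\psi\|^2$, it invokes the density of compactly supported functions in $\cD^j(\R^N)$ (Lemma~\ref{density-compact-support}) to approximate $\psi$ by some $\phi$ with $\mathrm{supp}\,\phi\subset B_R$, and then works with $w_n:=u_n-\phi$. The payoff is that on $\R^N\setminus B_R$ one has $u_n\equiv w_n$ \emph{exactly}, so $\Phi_2(u_n)=\Phi_2(w_n)\le \Phi(w_n)$ with no error term at all; combined with $\Phi_1(u_n)\to\Phi_1(\psi)$ from local compactness, the contradiction follows by elementary norm estimates. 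This compact-support approximation is precisely the device that replaces the Brezis--Lieb step in your outline.
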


Both Theorem~\ref{sec:vanishing-nonvanishing} and Theorem~\ref{max-value-attained} admit straightforward extensions to cylindrical open sets of the type $\Omega:= U \times \R^k$, where $N= n+k$ with $1 \le k,n \le N-1$ and $U \subset \R^n$ is a bounded open set. More precisely, we have the following.  

\begin{thm}
\label{sec:vanishing-nonvanishing-cylinder}
Let $\Omega:= U \times \R^k$, where $N= n+k$ and $U \subset \R^n$ is a bounded open set, assume that $j$ satisfies $(A1)$ and $(A2)$, and let $(u_n)_n \subset \cD^j(\Omega)$ be a bounded sequence. Then one of the following alternatives holds.
\begin{enumerate}
\item[(i)] $$
\lim \limits_{n \to \infty} \int_{|u_n| \ge \eps}u_n^2\,dx = 0\qquad \text{for every $\eps>0$.}
$$ 
\item[(ii)] There exists a sequence $(x_n)_n \subset \{0\} \times \R^k$ such that, after passing to a subsequence, we have  
$$
x_n * u_n \weakto u \not = 0 \qquad \text{in $\cD^j(\Omega)$.}
$$
\end{enumerate}
\end{thm}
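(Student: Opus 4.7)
The plan is to reduce Theorem~\ref{sec:vanishing-nonvanishing-cylinder} to Theorem~\ref{sec:vanishing-nonvanishing}, exploiting only the support constraint $u_n \equiv 0$ on $\R^N \setminus \Omega$ and the boundedness of $U \subset \R^n$. Since $\cD^j(\Omega)$ is a closed subspace of $\cD^j(\R^N)$ with the same norm, the sequence $(u_n)_n$ is bounded in $\cD^j(\R^N)$, so Theorem~\ref{sec:vanishing-nonvanishing} applies. Alternative (i) of that theorem coincides verbatim with alternative (i) here, so I may assume instead that its alternative (ii) holds: after passing to a subsequence there exist $(x_n)_n \subset \R^N$ and $u \in \cD^j(\R^N) \setminus \{0\}$ with $x_n * u_n \weakto u$ in $\cD^j(\R^N)$. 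Writing $x_n = (x_n', x_n'') \in \R^n \times \R^k$, the remaining task is to show that one may replace $x_n$ by $(0, x_n'') \in \{0\}\times\R^k$ while still obtaining a nonzero weak limit, now in $\cD^j(\Omega)$.

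The key observation is that the ``horizontal'' components $(x_n')_n$ must be bounded. Indeed, $\supp(x_n * u_n) \subset (x_n' + U) \times \R^k$, so for any $\phi \in C_c^\infty(\R^N)$ with $\supp \phi \subset B' \times \R^k$ (with $B' \subset \R^n$ bounded), once $|x_n'|$ is large enough $x_n' + U$ is disjoint from $B'$ and consequently $\int (x_n * u_n)\, \phi\,dx = 0$. Since weak convergence in $\cD^j(\R^N)$ implies weak $L^2$-convergence, if $(x_n')_n$ had a subsequence with $|x_n'|\to\infty$, then $\langle u, \phi\rangle_{L^2}$ would vanish for every such $\phi$; exhausting $\R^N$ by bounded slabs $B'\times\R^k$ and a density argument in $L^2$ would force $u=0$, contradicting alternative (ii). Therefore $(x_n')_n$ is bounded; after passing to a further subsequence, $x_n' \to x_0' \in \R^n$.

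Now set $y_n := (0, x_n'') \in \{0\}\times \R^k$ and $v_n := y_n * u_n$. Invariance of $\Omega$ under translations by $y_n$ ensures $v_n \in \cD^j(\Omega)$, and translation invariance of the $\cD^j$-norm gives $(v_n)$ bounded in $\cD^j(\R^N)$. For any $g \in \cD^j(\R^N)$, the translation invariance of the inner product yields
\[
\langle v_n, g\rangle_{\cD^j} = \langle x_n * u_n,\, (x_n', 0) * g\rangle_{\cD^j}.
\]
Strong continuity of the translation action on $\cD^j(\R^N)$ (a standard consequence of the translation invariance of $\cE_j$, verified first on a dense subspace of regular functions and then extended via the translation isometry) gives $(x_n', 0) * g \to (x_0', 0) * g$ strongly in $\cD^j(\R^N)$; combining this with $x_n * u_n \weakto u$ yields $\langle v_n, g\rangle_{\cD^j} \to \langle u,(x_0', 0)*g\rangle_{\cD^j} = \langle (-x_0', 0)*u,\, g\rangle_{\cD^j}$. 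Hence $v_n \weakto v := (-x_0', 0) * u$ in $\cD^j(\R^N)$. Since $\cD^j(\Omega)$ is weakly closed in $\cD^j(\R^N)$, $v \in \cD^j(\Omega)$; and $v \neq 0$ because translation is a bijection of $\cD^j(\R^N)$ and $u \neq 0$. This is precisely alternative (ii) with the sequence $(y_n)_n \subset \{0\}\times\R^k$. The one nontrivial technical point in the argument is the strong continuity of translations on $\cD^j(\R^N)$; the remaining steps are bookkeeping using the support structure of $\Omega$.
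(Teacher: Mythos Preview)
Your argument is correct. The paper does not actually write out a proof of this theorem; it only asserts that the proof is ``completely parallel'' to that of Theorem~\ref{sec:vanishing-nonvanishing} and omits it. Your route is not a parallel re-run of that proof but a clean \emph{reduction} to it as a black box, followed by a correction of the translation direction.

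The comparison is as follows. If one re-runs the proof of Theorem~\ref{sec:vanishing-nonvanishing} inside the cylinder, the points $x_n$ produced there lie in $\Omega_n=\{|u_n|>\eps\}\subset\Omega=U\times\R^k$, so the boundedness of the horizontal components $x_n'\in\overline{U}$ is automatic. In your black-box reduction this information is not visible, so you supply a separate (and correct) support argument: if $|x_n'|\to\infty$ along a subsequence, the supports of $x_n*u_n$ leave every slab $B'\times\R^k$, forcing the weak $L^2$-limit to vanish against all $\phi\in C_c^\infty(\R^N)$ and hence $u=0$, a contradiction. Either way, once $(x_n')$ is bounded and $x_n'\to x_0'$ along a subsequence, one must pass from translations by $x_n$ to translations by $(0,x_n'')$; both routes need exactly the ingredient you isolate, namely strong continuity of the translation action on $\cD^j(\R^N)$. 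This is available via the density of $C_c^\infty(\R^N)$ in $\cD^j(\R^N)$ (Remark~\ref{density-C-infty-c}) together with the fact that translations act isometrically, so your identity $\langle v_n,g\rangle=\langle x_n*u_n,(x_n',0)*g\rangle\to\langle u,(x_0',0)*g\rangle$ goes through. Your reduction is slightly longer because of the extra boundedness step, but it has the advantage of being modular and of not reopening the Fatou-type argument from the proof of Theorem~\ref{sec:vanishing-nonvanishing}.
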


\begin{thm}
\label{max-value-attained-cylinder}
Assume that $j$ satisfies $(A1)$ and $(A2)$. Let $\Omega:= U \times \R^k$, where $N= n+k$ and $U \subset \R^n$ is a bounded open set, let $S(\Omega):= \{u \in \cD^j(\Omega) \::\: \|u\| =1\}$, and let $F\in C(\R)$ satisfy (F1)--(F3). Then the maximal value 
$$
m_{F,\Omega}:= \sup_{u \in S(\Omega)} \Phi(u).
$$
of $\Phi$ on $S(\Omega)$ is attained, i.e., there exists $u \in S(\Omega)$ with $\Phi(u)=m_{F,\Omega}$.
\end{thm}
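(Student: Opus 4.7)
The plan is to copy the argument of Theorem~\ref{max-value-attained} with two small adjustments: invoke the cylindrical dichotomy Theorem~\ref{sec:vanishing-nonvanishing-cylinder} in place of Theorem~\ref{sec:vanishing-nonvanishing}, and observe that translations by vectors in $\{0\}\times \R^k$ preserve the set $\Omega$, and hence the space $\cD^j(\Omega)$, while $\cE_j$ and $\|\cdot\|_{L^2}$ are translation invariant in all of $\R^N$. We may assume $F\not\equiv 0$, so that $m:=m_{F,\Omega}>0$; otherwise every element of $S(\Omega)$ trivially attains the supremum.

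Let $(u_n)_n\subset S(\Omega)$ be a maximizing sequence with $\Phi(u_n)\to m$, and apply Theorem~\ref{sec:vanishing-nonvanishing-cylinder}. To rule out alternative~(i), fix $\delta>0$ and use (F2) to pick $\eps>0$ with $F(t)\le \delta t^2$ for $|t|\le\eps$; combined with (F1) this yields
\[
\Phi(u_n) \le \delta\,\|u_n\|_{L^2}^2 + c_\infty\!\int_{|u_n|\ge \eps}\!\!u_n^2\,dx \le \delta + o(1).
\]
Letting $n\to\infty$ gives $\limsup_n \Phi(u_n)\le \delta$, and since $\delta$ was arbitrary this forces $\Phi(u_n)\to 0$, contradicting $m>0$. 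Hence alternative~(ii) holds, producing a subsequence, translations $x_n\in\{0\}\times\R^k$, and some $u\ne 0$ with $v_n:=x_n*u_n\weakto u$ in $\cD^j(\Omega)$. Because $x_n\in\{0\}\times\R^k$, each $v_n$ still vanishes outside $\Omega$, so $v_n\in S(\Omega)$, and by translation invariance $\Phi(v_n)=\Phi(u_n)\to m$.

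Next, Theorem~\ref{sec:local-compactness-2} upgrades $v_n\weakto u$ (along a further subsequence) to $L^2_{\mathrm{loc}}(\R^N)$-convergence, and hence to pointwise a.e.\ convergence. Combined with the growth bound $|F(t)|\le c_\infty t^2$ and $F(0)=0$, the nonlinear Brezis--Lieb lemma gives $\Phi(v_n)=\Phi(u)+\Phi(v_n-u)+o(1)$, while the Hilbert-space Brezis--Lieb identity in $\cD^j(\Omega)$ gives $\|v_n-u\|^2 = 1-\|u\|^2+o(1)$, so in particular $\|u\|\le 1$.

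The last step exploits (F3). For any $w\in\cD^j(\Omega)$ with $0<\|w\|\le 1$, (F3) produces the pointwise bound $F(w)\le \|w\|^2\,F(w/\|w\|)$, hence $\Phi(w)\le \|w\|^2\,\Phi(w/\|w\|)\le m\|w\|^2$. Since $\|u\|>0$ implies $\|v_n-u\|<1$ eventually, applying this bound to $w=v_n-u$ and passing to the limit yields
\[
m \le \Phi(u)+m(1-\|u\|^2), \qquad \text{i.e.,}\qquad \Phi(u)\ge m\|u\|^2.
\]
The same (F3)-inequality then gives $\Phi(u/\|u\|)\ge \Phi(u)/\|u\|^2\ge m$; since $u/\|u\|\in S(\Omega)$, equality must hold and $u/\|u\|$ attains $m_{F,\Omega}$. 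The only delicate point, inherited from the proof of Theorem~\ref{max-value-attained}, is this (F3)-based renormalisation; local compactness, Brezis--Lieb, and translation invariance do the rest mechanically.
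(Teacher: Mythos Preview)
Your overall strategy is sound and different from the paper's: the paper (which treats Theorem~\ref{max-value-attained} and declares the cylindrical case ``completely parallel'') never invokes Brezis--Lieb.  Instead it approximates the weak limit $\psi$ by a function $\phi$ with \emph{bounded support} (Lemma~\ref{density-compact-support}), splits $\Phi=\Phi_1+\Phi_2$ according to a large ball $M\supset\supp\phi$, uses local compactness to pass to the limit in $\Phi_1$, and controls $\Phi_2(u_n)=\Phi_2(w_n)\le \Phi(w_n)\le m_F\|w_n\|^2$ via Lemma~\ref{sec:comp-stat-2}.  Your route via Brezis--Lieb is shorter and avoids Lemma~\ref{density-compact-support}, while the paper's route is more elementary and needs no Brezis--Lieb machinery.

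There is, however, a genuine gap in your Brezis--Lieb step.  You write that ``the growth bound $|F(t)|\le c_\infty t^2$ and $F(0)=0$'' yield $\Phi(v_n)=\Phi(u)+\Phi(v_n-u)+o(1)$.  These two properties alone are \emph{not} sufficient for the nonlinear Brezis--Lieb lemma: the standard hypothesis (Br\'ezis--Lieb, \emph{Proc.\ AMS} 1983, Thm.~2) is the splitting estimate
\[
|F(a+b)-F(a)|\le \eps\,a^2 + C_\eps\,b^2\qquad\text{for all }a,b\in\R,
\]
and for a merely continuous $F$ with quadratic growth this can fail (take e.g.\ $F(t)=t^2(1+\sin t)/2$).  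What saves you here is (F3): writing $F(t)=t^2 g(t)$ with $g(t)=F(t)/t^2$, assumptions (F1)--(F3) force $g$ to be continuous, monotone and bounded on each half-line, hence uniformly continuous there; from this one can verify the splitting estimate case by case (small $|b|$ via uniform continuity of $g$, large $|b|$ relative to $|a|$ via the crude bound $|F(a+b)|+|F(a)|\le 5c_\infty b^2$, etc.).  This verification is not difficult but it is not automatic, and it is the reason (F3) enters your argument \emph{before} the renormalisation step, not only after.  You should either supply this check or switch to the paper's compact-support splitting, which sidesteps the issue entirely.
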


It is natural to ask whether Theorem~\ref{max-value-attained-cylinder} still holds if the set $S(\Omega)$ is replaced by 
$$
\tilde S(\Omega):= \{u \in \cD^j(\Omega)\::\: \cE_j(u,u)=1\}.
$$
This leads to the question whether $u \mapsto \sqrt{\cE_j(u,u)}$ defines a 
norm on $\cD^j(\Omega)$ which is equivalent to $\|\cdot\|$. In the case of bounded open sets $\Omega \subset \R^N$, this is indeed the case due to the Poincar\'e inequality given in \cite[Lemma 2.7]{FKV13}. Here we note that, by a simple variant of the argument given in \cite{FKV13}, the Poincar\'e inequality for $\cE_j$ extends to domains which are only bounded in one space direction. More precisely, we have the following. 

\begin{prop}[Poincar\'e Inequality]\label{poincare}
	Assume that $j$ satisfies (A1). Then for any $a>0$ there is a constant $C_a>0$ such that for any $\Omega\subset (-a,a)\times \R^{N-1}$ we have
        \begin{equation}
          \label{eq:poincare-first-claim}
	\cE_j(u,u)\geq C_a\int_{\Omega} u^2(x)\ dx.
        \end{equation}
	Moreover, $\liminf \limits_{a \to 0^+}\, C_a \ge  \int_{\R^N}j(z)\ dz$.
\end{prop}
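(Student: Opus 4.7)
The plan is to adapt the killing-measure argument from the bounded-domain case (Lemma~2.7 in \cite{FKV13}) to the cylindrical setting. The only geometric feature of $\Omega$ that will enter is the containment $\Omega \subset (-a,a) \times \R^{N-1}$. The central observation is this: if $x, y \in \R^N$ satisfy $|x_1 - y_1| > 2a$, then $|x_1| + |y_1| > 2a$ forces one of $|x_1|, |y_1|$ to be at least $a$, so at least one of $x, y$ lies outside $(-a,a) \times \R^{N-1} \supset \Omega$, and hence $u(x)\,u(y) = 0$ for every $u \in \cD^j(\Omega)$. Consequently, on the region $\{(x,y) \,:\, |x_1 - y_1| > 2a\}$ we have the pointwise identity $(u(x) - u(y))^2 = u(x)^2 + u(y)^2$.

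Exploiting this, I restrict the double integral defining $\cE_j(u,u)$ to the above region, substitute the identity, symmetrize in $x \leftrightarrow y$ (using $j(z) = j(-z)$), and change variables via $z = y - x$, to arrive at
\[
\cE_j(u,u) \;\geq\; C_a \int_\Omega u(x)^2 \, dx, \qquad \text{where} \quad C_a := \int_{\{z \in \R^N \,:\, |z_1| > 2a\}} j(z)\, dz.
\]
This is precisely (\ref{eq:poincare-first-claim}).

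For the ``moreover'' statement, I note that the indicator $1_{\{|z_1| > 2a\}}$ increases pointwise as $a \to 0^+$ to $1$ off the Lebesgue-null set $\{z_1 = 0\} \subset \R^N$, so the monotone convergence theorem yields $C_a \uparrow \int_{\R^N} j(z)\, dz$, and in particular $\liminf_{a \to 0^+} C_a = \int_{\R^N} j(z)\, dz$. Since (A1) guarantees that $j$ is not a.e.\ zero, this common limit is strictly positive, confirming $C_a > 0$ at least for all sufficiently small $a > 0$. I do not foresee any substantial obstacle; the only care required is in the support bookkeeping that produces the identity $(u(x) - u(y))^2 = u(x)^2 + u(y)^2$ on the slab-slicing region $\{|x_1 - y_1| > 2a\}$, which is immediate from $\Omega \subset (-a,a) \times \R^{N-1}$.
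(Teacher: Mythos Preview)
Your killing-measure computation is correct and is precisely what the paper uses for the ``moreover'' clause. The gap is in the main claim: the proposition requires $C_a>0$ for \emph{every} $a>0$, and your constant $C_a=\int_{\{|z_1|>2a\}}j(z)\,dz$ can vanish for large $a$. Assumption (A1) permits $j$ to be supported in an arbitrarily thin slab; e.g.\ $j=1_{B_1}$ satisfies (A1), yet your $C_a=0$ for every $a\ge 1/2$. You flagged this yourself (``at least for all sufficiently small $a>0$''), but it is not a technicality: for such kernels the inequality~\eqref{eq:poincare-first-claim} still holds with a strictly positive constant, and your argument cannot produce one.

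The paper closes this gap by replacing $j$ with the bounded integrable minorant $q:=\min\{1,j\}$ and forming the $2^m$-fold self-convolution $q_m$. An elementary iteration estimate (Lemma~\ref{iterationlemma}, a variant of \cite[Lemma~10]{DK11}) gives $\cE_{q_m}(u,u)\le c_m\,\cE_q(u,u)\le c_m\,\cE_j(u,u)$ for explicit $c_m$. The point of convolving is that it spreads the support: since $q_1(0)=\|q\|_{L^2}^2>0$ and $q_1$ is continuous, $q_1>0$ on some ball $B_\delta$, and then $q_m>0$ on $B_{2^m\delta}$. Choosing $m$ with $2^m\delta>2a$, your killing-measure argument applied with $q_m$ in place of $j$ yields a strictly positive constant for that $a$, however large. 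Your direct bound then reappears, exactly as you wrote it, to handle the limit $a\to 0^+$.
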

We point out that $(A2)$ is not needed here, so Proposition~\ref{poincare} also applies to quadratic forms of convolution type with $j \in L^1(\R^N)$. We note that
Proposition~\ref{poincare} parallels the classical Poincar\'e inequality  for domains $\Omega\subset (-a,a)\times \R^{N-1}$,  which states that 
$$
\||\nabla u|\|_{L^2(\Omega)} \ge \frac{\|u\|_{L^2(\Omega)}}{a}\qquad \text{for all
$u \in H^1_0(\Omega)$,}
$$
where $H^1_0(\Omega)$ is a usual (homogenous) first order Sobolev space. 
This classical Poincar\'e inequality is a fundamental tool in the theory of weak solutions of second order elliptic boundary value problems. In particular, it gives a lower bound for the spectrum of the Dirichlet Laplacian in $\Omega$, and it leads to a maximum principle on narrow domains which then can be used in moving plane type arguments. Similar applications arise from Proposition~\ref{poincare}, and we leave them for future work.

The paper is organized as follows. In Section \ref{sec:local-compactness} we give the proof of Theorem \ref{sec:local-compactness-2}. In Section \ref{sec:estim-kill-meas}, we first derive a key estimate for the killing measure associated to $j$ , and we then complete the proofs of Theorem~\ref{thm-cor-sec:local-compactness-5} and Theorem~\ref{sec:vanishing-nonvanishing}. 
In Section~\ref{sec:appl-maxim-probl}, we focus on the maximization problem related to (\ref{eq:def-m-F-intro}), and we give the proof of Theorem \ref{max-value-attained}. Since the proofs of Theorems~\ref{sec:vanishing-nonvanishing-cylinder} and Theorem~\ref{max-value-attained-cylinder} are completely parallel to the ones of Theorems~\ref{sec:vanishing-nonvanishing} and Theorem~\ref{max-value-attained}, we skip them. Finally, Section \ref{sec:poincare-inequality} is devoted to the proof of Proposition \ref{poincare}.

Throughout the paper, we let $B_r:=B_r(0)$ denote the open ball of radius $r>0$ centered at zero.

\section{Local compactness}
\label{sec:local-compactness}

This section is devoted to the proof of Theorem \ref{sec:local-compactness-2}. 
Throughout this section, $j: \R^N \to [0,\infty]$ denotes a kernel function satisfying $(A1)$ and $(A2)$. We need the following lemma which we believe is known. Since we could not find the statement in the literature in this form, we give a simple proof for the convenience of the reader. 

\begin{lemma}
\label{sec:local-compactness-3}
Let $w \in L^1(\R^N)$. Then the corresponding convolution operator 
\begin{equation}
  \label{eq:def-conv-operator}
T_w: L^2(\R^N) \to L^2(\R^N),\qquad T_wu = w * u
\end{equation}
is locally compact.  
\end{lemma}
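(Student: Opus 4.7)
The plan is to reduce the statement to the case of continuous compactly supported kernels by a density argument and then to invoke the Arzel\`a--Ascoli theorem.

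First I would note that Young's convolution inequality gives
\[
\|w*u\|_{L^2(\R^N)} \le \|w\|_{L^1(\R^N)}\,\|u\|_{L^2(\R^N)}\qquad\text{for all $u\in L^2(\R^N)$,}
\]
so $w \mapsto T_w$ is continuous from $L^1(\R^N)$ into the space of bounded linear operators on $L^2(\R^N)$. For any compact $K \subset \R^N$, the map $w \mapsto R_K T_w$ is therefore also continuous in operator norm, and since the space of compact operators is norm-closed in the space of bounded operators, it suffices to prove the statement for $w$ in a dense subset of $L^1(\R^N)$. A convenient choice is $C_c(\R^N)$.

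Next, fix $w \in C_c(\R^N)$ and a compact set $K \subset \R^N$. For any $u$ in the closed unit ball of $L^2(\R^N)$, the Cauchy--Schwarz inequality yields the pointwise bound
\[
|(w*u)(x)| \le \|w\|_{L^2(\R^N)}\|u\|_{L^2(\R^N)} \le \|w\|_{L^2(\R^N)}\qquad\text{for all $x \in \R^N$,}
\]
while for every $h \in \R^N$,
\[
|(w*u)(x+h)-(w*u)(x)| \le \|w(\cdot - h)-w\|_{L^2(\R^N)}\|u\|_{L^2(\R^N)} \le \|w(\cdot-h)-w\|_{L^2(\R^N)}.
\]
Continuity of translation in $L^2(\R^N)$ then provides equicontinuity of the family $\{w*u : \|u\|_{L^2(\R^N)} \le 1\}$ uniformly on $K$. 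By the Arzel\`a--Ascoli theorem, $\{1_K(w*u) : \|u\|_{L^2(\R^N)} \le 1\}$ is relatively compact in $C(K)$, and since $K$ has finite Lebesgue measure, this set is also relatively compact in $L^2(\R^N)$. Hence $R_K T_w$ is compact.

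No serious obstacle is anticipated: the argument is a standard smoothing-and-approximation scheme, and the only slightly delicate point is to arrange the approximation to pass to the compact operator limit, which is handled cleanly by the norm bound from Young's inequality.
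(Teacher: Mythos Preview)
Your proof is correct and follows essentially the same approach as the paper's: approximate $w$ by compactly supported continuous (the paper uses $C_c^\infty$) kernels via Young's inequality and the norm-closedness of compact operators, then apply Arzel\`a--Ascoli using the Cauchy--Schwarz bound $|(w*u)(x+h)-(w*u)(x)|\le \|w(\cdot-h)-w\|_{L^2}\|u\|_{L^2}$ for equicontinuity. The only cosmetic differences are that you place the density reduction first and explicitly record the uniform pointwise bound needed for Arzel\`a--Ascoli.
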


\begin{proof}
By Young's convolution inequality, $T_w$ is a continuous linear map. Let $K \subset \R^N$ be a compact subset.
We first consider the case where $w \in C^\infty_c(\R^N)$. Let $M \subset L^2(\R^N)$ be a bounded set. We show that  
$T_w(M) \subset L^2(\R^N)$ is an equicontinuous set of functions on $\R^N$. Indeed, if $u \in M$ and $x, h \in \R^N$, we have 
\begin{align*}
&|[T_wu](x+h)-[T_wu](x)| = \int_{\R^N}u(z)[w(x+h-z)-w(x-z)]\,dz \\
&\le \|u\|_{L^2(\R^N)} 
\Bigl(\int_{\R^N} |w(x+h-z)-w(x-z)|^2\,dz\Bigr)^{\frac{1}{2}}= \|u\|_{L^2(\R^N)}^2 \sqrt{d(h)}, 
\end{align*}
where 
$$
d(h):= \int_{\R^N} |w(y+h)-w(y)|^2\,dy \to 0 \qquad \text{as $h \to 0$}
$$
Hence $T_w(M)$ is equicontinuous, and therefore $[T_w](M)$ is a relatively compact subset of $C(K)$ by 
the Arzela-Ascoli Theorem (here we identify a function $u$ on $\R^N$ with its restriction to $K$). Consequently, $[R_K T_w](M)$ is relatively compact in $L^2(\R^N)$, and therefore 
$$
\text{$R_K T_w \in \cL(L^2(\R^N), L^2(\R^N))$ is a compact operator.}
$$
Next, for general $w \in L^1(\R^N)$, we let $(w_n)_n$ be a sequence in $C^\infty_c(\R^N)$ with $w_n \to w$ in $L^1(\R^N)$. Then we have 
$$
\|[R_K T_{w_n}-R_K T_w](u)\|_{L^2(\R^N)} \le \|(w_n-w)* u\|_{L^2(\R^N)} \le \|w_n-w\|_{L^1(\R^N)} \|u\|_{L^2(\R^N)} \quad \text{for $u \in L^2(\R^N)$}
$$
and therefore $R_K T_{w_n} \to R_K T_w$ in $\cL(L^2(\R^N),L^2(\R^N))$ as $n \to \infty$. Since we have already seen that $R_K T_{w_n}$ is compact for every $n \in \N$, the operator $R_K T_w$ is compact as well, as claimed. 
\end{proof}

In the next lemma, we estimate the $L^2$-distance of functions in $\cD^j(\R^N)$ to their weighted averages, where a cut-off of the kernel $j$ is used as a weight function.

\begin{lemma}
\label{sec:local-compactness-j-delta}
Let $\delta>0$ be such that $j_\delta:= j\, 1_{\R^N \setminus B_\delta} \in L^1(\R^N) \setminus \{0\}$, and consider the function $w_\delta:= \frac{j_\delta}{\|j_\delta\|_{L^1(\R^N)}}$. Then we have  
$$
\|u-T_{w_\delta} u\|_{L^2(\R^N)} \le \Bigl( \frac{2}{\|j_\delta\|_{L^1(\R^N)}}\Bigr)^{\frac{1}{2}}\|u\| \qquad \text{for $u \in \cD^j(\R^N)$,}
$$
where $T_{w_\delta}$ denotes the convolution operator with $w_\delta$ as defined in (\ref{eq:def-conv-operator}). 
\end{lemma}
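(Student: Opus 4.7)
The plan is to exploit the fact that $w_\delta$ is a probability density on $\R^N$ so that $T_{w_\delta}u(x)$ is a weighted average of $u$ around $x$, and then to recognize the square of the deviation $u-T_{w_\delta}u$ as a quantity controlled by $\cE_j(u,u)$.

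First I would write, for $u \in \cD^j(\R^N)$ and a.e.\ $x\in\R^N$,
\[
u(x) - T_{w_\delta}u(x) \;=\; \int_{\R^N}\bigl(u(x)-u(y)\bigr)\,w_\delta(x-y)\,dy
\;=\; \frac{1}{\|j_\delta\|_{L^1(\R^N)}}\int_{\R^N}\bigl(u(x)-u(y)\bigr)\,j_\delta(x-y)\,dy,
\]
using $\int_{\R^N}w_\delta\,dy = 1$. Next I would apply the Cauchy--Schwarz inequality (equivalently Jensen's inequality with respect to the probability measure $w_\delta(x-y)\,dy$) to obtain
\[
\bigl|u(x) - T_{w_\delta}u(x)\bigr|^{2} \;\le\; \frac{1}{\|j_\delta\|_{L^1(\R^N)}}\int_{\R^N}\bigl(u(x)-u(y)\bigr)^{2}\,j_\delta(x-y)\,dy.
\]

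Then I would integrate in $x$, apply Fubini, and use the pointwise bound $j_\delta \le j$ to get
\[
\|u - T_{w_\delta}u\|_{L^2(\R^N)}^{2} \;\le\; \frac{1}{\|j_\delta\|_{L^1(\R^N)}}\int_{\R^N}\int_{\R^N}\bigl(u(x)-u(y)\bigr)^{2}\,j(x-y)\,dx\,dy \;=\; \frac{2}{\|j_\delta\|_{L^1(\R^N)}}\,\cE_j(u,u).
\]
Finally, since $\cE_j(u,u) \le \|u\|^2$ by the definition of the norm on $\cD^j(\R^N)$, taking square roots yields the claimed estimate.

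The argument is essentially a one-line Jensen-plus-Fubini computation, so there is no real obstacle; the only point requiring any care is to ensure $w_\delta$ is honestly a probability density (which is why the hypothesis $j_\delta \in L^1(\R^N)\setminus\{0\}$ is included) and to use symmetry of $j$ implicit in writing $w_\delta(x-y)$ when passing between convolution and averaging.
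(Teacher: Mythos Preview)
Your proof is correct and follows essentially the same route as the paper: write $u(x)-T_{w_\delta}u(x)$ as a $w_\delta$-average of differences, apply Jensen/Cauchy--Schwarz with the probability measure $w_\delta(x-y)\,dy$, integrate in $x$, and bound $j_\delta\le j$ to recover $2\cE_j(u,u)\le 2\|u\|^2$. The only cosmetic difference is that the paper first uses the evenness of $w_\delta$ to change variables to $z=y-x$ before invoking Jensen, whereas you work directly in the $y$-variable.
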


\begin{proof}
Let $u \in \cD^j(\R^N)$. Then we have, by the evenness of $w_\delta$,  
$$
[T_{w_\delta} u](x) = [w_\delta * u](x)= \int_{\R^N}w_\delta(x-y)u(y)\,dy= \int_{\R^N}u(x+z)w_\delta(z)\,dz \qquad \text{for $x \in \R^N$.}
$$
Moreover, since $\|w_\delta\|_{L^1(\R^N)}=1$, Jensen's inequality implies that
\begin{align*}
\|u-T_{w_\delta} u\|_{L^2(\R^N)}^2 &= \int_{\R^N}\Bigl( u(x)- [T_{w_\delta} u](x)\Bigr)^2\,dx \\
&= \int_{\R^N} \Bigl( \int_{\R^N}[u(x)- u(x+z)]w_\delta(z)\,dz\Bigr)^2\,dx\le  \int_{\R^N} \int_{\R^N}[u(x)- u(x+z)]^2 w_\delta(z)\,dz \,dx\\
& \le  \frac{1}{\|j_\delta \|_{L^1(\R^N)}} \int_{\R^N} \int_{\R^N}[u(x)- u(x+z)]^2 j(z)\,dz \,dx \leq  \frac{2\|u\|^2}{\|j_\delta\|_{L^1(\R^N)}},
\end{align*}
as claimed.
\end{proof}

We now have all the tools to complete the

\begin{proof}[Proof of Theorem~\ref{sec:local-compactness-2}]
Let $M \subset \cD^j(\R^N)$ be a bounded set, and let $K \subset \R^N$ be compact. We need to show that $R_K(M) \subset L^2(\R^N)$ is relatively compact. Let $C:= \sup \limits_{u \in M}\|u\|$, and let $\eps>0$. Since $j$ satisfies $(A1)$ and $\int_{\R^N}j(z)\,dz = \infty$, there exists $\delta>0$ such that $\|j_\delta\|_{L^1(\R^N)} \ge \frac{2C^2}{\eps^2}$. Here and in the following, we use the notation from Lemma~\ref{sec:local-compactness-j-delta}. Moreover, $\tilde M:= [R_K T_{w_\delta}](M)$ is relatively compact in $L^2(\R^N)$ by Lemma~\ref{sec:local-compactness-3}. For $u \in M$, Lemma~\ref{sec:local-compactness-j-delta} implies that 
$$
\|R_K u - [R_K T_{w_\delta}]u\|_{L^2(\R^N)} \le \|u - T_{w_\delta} u\|_{L^2(\R^N)}\le 
\Bigl( \frac{2}{\|j_\delta\|_{L^1(\R^N)}}\Bigr)^{\frac{1}{2}} \|u\|\le \frac{\eps \|u\| }{C}\le \eps, 
$$
and therefore $R_K(M)$ is contained in the $\eps$-neighborhood of $\tilde M$. Since $\eps>0$ was chosen arbitrarily, we conclude that 
$R_K(M)$ is totally bounded in $L^2(K)$ and therefore relatively compact. The proof is finished.
\end{proof}

\section{An estimate for the killing measure and its consequences}
\label{sec:estim-kill-meas}
 In this section, we first derive estimates for the killing measure associated with the kernel $j$ in terms of its decreasing rearrangement. We then use these estimates to complete the proofs of Theorem~\ref{thm-cor-sec:local-compactness-5} and \ref{sec:vanishing-nonvanishing}. Throughout this section, we assume that 
$j: \R^N \to [0,\infty]$ satisfies $(A1)$. The decreasing rearrangement of $j$ is defined as 
$$
d_j:[0,\infty) \to [0,\infty], \qquad d_j(r)=  \inf \{c \ge 0 \::\: |\{j > c\}| \le r \}
$$
By definition, $d_j$ is a nonincreasing function, and it also has the following properties:
\begin{equation}
  \label{eq:prop-rearrange}
\text{$d_j$ is right continuous,}\quad  d_j(0)= \underset{\R^N}{\esssup}\: j, \quad \text{and}\quad |\{j \ge d_j(r)\}| \ge r \qquad \text{for every $r>0$.} 
\end{equation}
The first property is classical, and the second property is a consequence of the first. The third property in (\ref{eq:prop-rearrange}) is obvious if $d_j(r)=0$ since $|\{j \ge 0\}|= \R^N$. If $d_j(r)>0$, we have 
$$
d_j(r)= \inf \{c \ge 0 \::\: |\{j > c\}| \le r \} = \sup \{c \ge 0 \::\: |\{j > c\}| > r \}
$$
and therefore $|\{j \ge c\}|\ge |\{j > c\}| \ge r$ for every $c < d_j(r)$, whereas the property $j \in L^{1}(\R^{N}\setminus B_1(0))$ 
 implies that $|\{j \ge c\}|<\infty$ for every $c>0$. Since 
$$
\{j \ge d_j(r)\}= \underset{c < d_j(r)}{\bigcap}\{j \ge c\},
$$ 
we therefore conclude that $|\{j \ge d_j(r)\}| = \inf \limits_{c < d_j(r)} |\{j \ge c \}| \ge r.$

We now want to relate the decreasing rearrangement of $j$ to the killing measure associated with $j$ and a measurable set $\Omega \subset \R^N$, which is a function defined by   
\begin{equation}
  \label{eq:def-killing-measure}
\kappa_\Omega: \R^N \to [0,\infty], \qquad \kappa_\Omega(x)= \int_{\R^N \setminus \Omega}j(x-y)\,dy. 
\end{equation}

We have the following inequality.

\begin{lemma}
\label{sec:local-compactness-1-0}
For $\Omega \subset \R^N$ with $|\Omega| <\infty$ and $x \in \R^N$ we have 
\begin{equation}
  \label{eq:kappa-ineq}
\kappa_{\Omega}(x) \ge \kappa(|\Omega|),
\end{equation}
where $\kappa:[0,\infty) \to [0,\infty]$ is defined by 
\begin{equation}
\label{kappa-function}
\kappa(r):= \int_{\{ j < d_j(r)\}}\!\!j(z)\,dz \;+\; d_j(r) \Bigl(|\{j \ge d_j(r)\}|-r\Bigr)   \qquad \text{for $r \ge 0$.} 
\end{equation}
Moreover, if $j$ satisfies $(A2)$, then $\kappa(r) \to \infty$ as $r \to 0$.
\end{lemma}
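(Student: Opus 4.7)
The plan is to treat the two assertions separately. The pointwise inequality~(\ref{eq:kappa-ineq}) is a rearrangement-type estimate, while the limit statement reduces to identifying $\kappa(r)$ with the tail integral $\int_r^\infty d_j(s)\,ds$.

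By the evenness $j(-z)=j(z)$ (from~(A1)) and the substitution $z=y-x$, one has $\kappa_\Omega(x)=\int_{\R^N\setminus A}j(z)\,dz$ with $A:=\Omega-x$ and $|A|=|\Omega|=r$. It thus suffices to prove that for every measurable $A\subset\R^N$ with $|A|=r$,
$$
\int_{\R^N\setminus A}j(z)\,dz\;\ge\;\kappa(r).
$$
To establish this, I would split $\R^N\setminus A$ across the threshold $d_j(r)$ and bound each piece as follows. On $\{j\ge d_j(r)\}\setminus A$, the pointwise estimate $j\ge d_j(r)$ combined with $|\{j\ge d_j(r)\}|\ge r$ from~(\ref{eq:prop-rearrange}) gives
$$
\int_{\{j\ge d_j(r)\}\setminus A}j\,dz\;\ge\;d_j(r)\bigl(|\{j\ge d_j(r)\}|-|A\cap\{j\ge d_j(r)\}|\bigr).
$$
On $\{j<d_j(r)\}\setminus A$, I would write the integral as $\int_{\{j<d_j(r)\}}j\,dz - \int_{A\cap\{j<d_j(r)\}}j\,dz$ and bound the second piece above by $d_j(r)|A\cap\{j<d_j(r)\}|$. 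Adding the two estimates and using $|A\cap\{j\ge d_j(r)\}|+|A\cap\{j<d_j(r)\}|=|A|=r$, the $A$-dependent terms cancel to leave exactly $\kappa(r)$.

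For the limit statement under~(A2), I would verify the identity $\kappa(r)=\int_r^\infty d_j(s)\,ds$. The key observation is that $d_j$ is constant equal to $d_j(r)$ on the interval $[r,|\{j\ge d_j(r)\}|)$ (an immediate consequence of the monotonicity of $d_j$ and the properties in~(\ref{eq:prop-rearrange})), so that $d_j(r)(|\{j\ge d_j(r)\}|-r)=\int_r^{|\{j\ge d_j(r)\}|}d_j(s)\,ds$; combined with the classical layer-cake identification $\int_{\{j<d_j(r)\}}j\,dz=\int_{|\{j\ge d_j(r)\}|}^\infty d_j(s)\,ds$, this yields the claim. Since~(A2) is equivalent to $\int_0^\infty d_j(s)\,ds=\|j\|_{L^1(\R^N)}=\infty$, monotone convergence gives $\kappa(r)\to\infty$ as $r\to 0^+$.

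The main obstacle is the bookkeeping in the first step: one must ensure that no spurious $\infty-\infty$ cancellations occur. Verifying that $d_j(r)<\infty$ for every $r>0$ (which follows from $j\in L^1(\R^N\setminus B_1)$ together with the fact that $|z|^2 j\in L^1(B_1)$ forces $|\{j=\infty\}|=0$) ensures that each of the four quantities being added and subtracted is non-negative and finite, so the cancellation is legitimate.
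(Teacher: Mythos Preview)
Your argument for the inequality~(\ref{eq:kappa-ineq}) coincides with the paper's: after the translation to a set $A=\Omega-x$ of measure $r$, both proofs split at the level $d=d_j(r)$ and use the pointwise bounds $j\ge d$ on $\{j\ge d\}\setminus A$ and $j<d$ on $A\setminus\{j\ge d\}$, with the same cancellation of the $A$-dependent measures. For the limit statement under~(A2), however, you take a different and somewhat longer route. The paper simply notes that the second summand in~(\ref{kappa-function}) is nonnegative by~(\ref{eq:prop-rearrange}), so that $\kappa(r)\ge\int_{\{j<d_j(r)\}}j\,dz$, and then observes that~(A2) forces $d_j(r)\to\infty$ as $r\to 0$, which sends this lower bound to $\int_{\R^N}j\,dz=\infty$. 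Your approach via the identity $\kappa(r)=\int_r^\infty d_j(s)\,ds$ is correct and yields a cleaner structural description of $\kappa$ (for instance, it makes the monotonicity and continuity of $\kappa$ transparent), but it requires verifying the layer-cake identification $\int_{\{j<d_j(r)\}}j\,dz=\int_{|\{j\ge d_j(r)\}|}^\infty d_j(s)\,ds$, whereas the paper's argument is essentially two lines.
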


\begin{proof}
The proof is somewhat similar to the proof of \cite[Proposition 3.3]{JW17}.
Without loss of generality, we may assume that $r:=|\Omega|>0$, since otherwise 
$$
\kappa_{\Omega}(x) = \int_{\R^N}j(x-y)\,dy=\int_{\R^N}j(z)\,dz= \kappa(0) \qquad \text{for every $x \in \R^N$.}
$$
>From $r>0$ we then deduce that $d:=d_j(r)<\infty$, and for every $x \in \R^N$ we have 
\begin{align*}
|\{j \ge d\} \setminus \Omega_x| -|\Omega_x \setminus \{j \ge d\}| &= |\{j \ge d\}|- |\{j \ge d\} \cap \Omega_x| -\Bigl(|\Omega_x|- |\{j \ge d\} \cap \Omega_x|\Bigr)\\
&= |\{j \ge d\}|-|\Omega_x|= |\{j \ge d\}|-r
\end{align*}
with $\Omega_x:= x+ \Omega$. Consequently, we have  
		\begin{align*}
\kappa_{\Omega}(x) &= \int_{\R^{N}\setminus \Omega_x} j(y)\ dy=\int_{\{j<d\}} j(y)\ dy +\int_{\{j \ge d\}\setminus \Omega_x}j(y)\ dy- \int_{\Omega_{x}\setminus \{j \ge d\}}j(y)\ dy\\
		&\ge \int_{\{j<d\}} j(y)\ dy + d \Bigl(|\{j \ge d\} \setminus \Omega_x| - 
		|\Omega_{x}\setminus \{j \ge d\} |\Bigr) = \int_{\{j<d\}} j(y)\ dy +d(|\{j \ge d\}|-r)\\
&= \kappa(r) \qquad \text{for $x \in \R^N$.}
 		\end{align*}
This shows (\ref{eq:kappa-ineq}). If in addition $j$ satisfies $(A2)$, then we have that $d_j(r) \to \infty$ as $r  \to 0$ and therefore 
$$
\int_{\{j < d_j(r)\}}\!\!j(z)\,dz \to \int_{\R^N} j(z)\,dz= \infty \qquad \text{as $r \to 0$.}
$$
Moreover, since $|\{j \ge d_j(r)\}| \ge r$ by (\ref{eq:prop-rearrange}), we have $\kappa(r) \ge \int_{\{j < d_j(r)\}}j(z)\,dz$ and therefore $\kappa(r) \to \infty$ as $r \to 0$.
\end{proof}

Next, for $t \ge 0$, we first consider the projection $P_t: L^2(\R^N) \to L^2(\R^N)$ onto the closed convex set 
$$
C_t:= \{u \in L^2(\R^N)\::\: |u| \le t \quad \text{a.e. on $\R^N$}\}.
$$
It is defined by 
\begin{equation}
  \label{eq:defi-projection}
[P_t u](x)= \left \{ 
  \begin{aligned}
  &u(x), &&\qquad |u(x)| \le t,\\
  &t, && \qquad u(x) > t,\\
  &-t, && \qquad u(x) < -t.  
  \end{aligned}
\right.
\end{equation}
Since 
$$
|[P_t u](x)-[P_t u](y)| \le |u(x)-u(y)| \qquad \text{for all $u \in \cD^j(\R^N)$, $x,y \in \R^N$,}
$$
if follows that $P_t(\cD^j(\R^N)) \subset \cD^j(\R^N)$, and that 
$$
\|P_t u\| \le \|u\| \qquad \text{for all $u \in \cD^j(\R^N)$, $t \ge 0$.}
$$
It is clear that the multiplication operator $R_\Omega: L^2(\R^N) \to L^2(\R^N)$ commutes with $P_t$ for all $t \ge 0$.

\begin{lemma}
\label{sec:local-compactness-1-0-0}
For $t>0$ and $u \in \cD^j(\R^N)$, we have 
$$
\|u-P_t u\|_{L^2(\R^N)} \le \Bigl(\:\inf_{x \in \Omega_{u,t}}
\kappa_{\Omega_{u,t}}(x)\Bigr)^{-\frac{1}{2}} \|u\|   \qquad \text{with $\Omega_{u,t}:= \{x \in \R^N \::\: |u(x)| > t\}$.}
$$
\end{lemma}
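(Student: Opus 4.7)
The plan is to reduce the $L^2$-bound on $u-P_tu$ to a one-sided chunk of the energy $\cE_j(u,u)$, where one integrates only over pairs $(x,y)$ with $x\in \Omega_{u,t}$ and $y\in \R^N\setminus\Omega_{u,t}$. This is natural because $v:=u-P_tu$ is supported exactly on $\Omega_{u,t}$: by the formula (\ref{eq:defi-projection}) one has $v(x)=0$ whenever $|u(x)|\le t$, and on $\Omega_{u,t}$ one has $v(x)=u(x)-t>0$ if $u(x)>t$, and $v(x)=u(x)+t<0$ if $u(x)<-t$. In particular $|v(x)|=|u(x)|-t$ for $x\in \Omega_{u,t}$.

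The key pointwise inequality is then the following: for every $x\in\Omega_{u,t}$ and every $y\in\R^N\setminus \Omega_{u,t}$ one has
$$
(u(x)-u(y))^{2}\;\ge\; v(x)^{2}.
$$
To see this, distinguish the cases $u(x)>t$ and $u(x)<-t$. In the first case $u(y)\le t$, so $u(x)-u(y)\ge u(x)-t = v(x)>0$. In the second case $u(y)\ge -t$, so $u(y)-u(x)\ge -t-u(x) = -v(x)>0$. In both cases, squaring gives the claim.

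With this in hand I would estimate, using the symmetry of $j$ and the definition (\ref{eq:def-killing-measure}) of the killing measure,
\begin{align*}
\cE_j(u,u) &\;\ge\; \int_{\Omega_{u,t}} \int_{\R^N\setminus \Omega_{u,t}} (u(x)-u(y))^2\, j(x-y)\, dy\, dx \\
&\;\ge\; \int_{\Omega_{u,t}} v(x)^2 \int_{\R^N\setminus \Omega_{u,t}} j(x-y)\, dy\, dx \\
&\;=\; \int_{\Omega_{u,t}} v(x)^2\, \kappa_{\Omega_{u,t}}(x)\, dx \;\ge\; \Bigl(\inf_{x\in \Omega_{u,t}}\kappa_{\Omega_{u,t}}(x)\Bigr) \|v\|_{L^2(\R^N)}^{2}.
\end{align*}
Since $\|v\|_{L^2(\R^N)}=\|u-P_tu\|_{L^2(\R^N)}$ and $\cE_j(u,u)\le \|u\|^2$, dividing by the infimum and taking square roots yields the stated bound.

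There is essentially no obstacle here: the argument is a one-sided, sign-based pointwise estimate followed by a Fubini reordering, and it mirrors the role of the killing measure already exhibited in Lemma~\ref{sec:local-compactness-1-0}. The only subtlety worth a line of care is the sign analysis that produces $(u(x)-u(y))^2\ge v(x)^2$; everything after that is a clean integration.
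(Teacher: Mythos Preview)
Your proof is correct and follows essentially the same approach as the paper: both restrict $\cE_j(u,u)$ to the cross region $\Omega_{u,t}\times(\R^N\setminus\Omega_{u,t})$, use a pointwise lower bound $(u(x)-u(y))^2\ge(|u(x)|-t)^2$ there, and pull out the infimum of the killing measure. The only cosmetic difference is that the paper routes the pointwise bound through $(|u(x)|-|u(y)|)^2$ and the reverse triangle inequality, whereas you do an explicit sign case analysis; these are equivalent.
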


\begin{proof}
Since $|u(x)|-|u(y)| \ge |u(x)|-t \ge 0$ for $x \in \Omega_{u,t}$, $y \in \R^N \setminus \Omega_{u,t}$, 
we have
\begin{align*}
\|u\|^2 &\ge  \int_{\R^N \setminus \Omega_{u,t}} \int_{\Omega_{u,t}}(|u(x)|-|u(y)|)^2j(x-y)\,dxdy\\
&\ge \int_{\Omega_{u,t}} \bigl(|u(x)|-t \bigr)^2 \int_{\R^N \setminus \Omega_{u,t}} j(x-y)\,dy dx = 
\int_{\Omega_{u,t}} \bigl(|u(x)|-t \bigr)^2 \kappa_{\Omega_{u,t}}(x)\,dx\\
&\ge \Bigl(\:\inf_{x \in \Omega_{u,t}}
\kappa_{\Omega_{u,t}}(x)\Bigr) \int_{\Omega_{u,t}} \bigl(|u(x)|-t \bigr)^2\,dx= \Bigl(\: \inf_{x \in \Omega_{u,t}}
\kappa_{\Omega_{u,t}}(x)\Bigr)\|u-P_t u\|_{L^2(\R^N)}^2,
\end{align*}
as claimed.
\end{proof}

\begin{cor}
\label{sec:local-compactness-1}
For $t>0$ and $u \in \cD^j(\R^N)$, we have 
$$
\|u-P_t u\|_{L^2(\R^N)} \le \bigl[\kappa(|\Omega_{u,t}|)\bigr]^{-\frac{1}{2}} \|u\|
$$
with $\Omega_{u,t}$ as in Lemma~\ref{sec:local-compactness-1-0} and the function $\kappa$ defined in (\ref{kappa-function}).
\end{cor}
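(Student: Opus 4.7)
The plan is to deduce this corollary by simply chaining together Lemma~\ref{sec:local-compactness-1-0-0} and the lower bound from Lemma~\ref{sec:local-compactness-1-0}, so the ``proof'' is essentially a two-line assembly once one checks a mild finiteness issue.

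First, I would observe that $|\Omega_{u,t}|<\infty$. Indeed, since $u\in L^2(\R^N)$ and $|u(x)|>t$ on $\Omega_{u,t}$, Chebyshev's inequality yields
$$
t^2 |\Omega_{u,t}| \le \int_{\Omega_{u,t}} u^2(x)\,dx \le \|u\|_{L^2(\R^N)}^2<\infty.
$$
This is important because Lemma~\ref{sec:local-compactness-1-0} is only stated for sets of finite measure. Note that if $|\Omega_{u,t}|=0$ the statement is trivial since then $P_t u = u$ a.e.\ and both sides are zero (with the usual convention that the right-hand side is nonnegative); so we may harmlessly assume $|\Omega_{u,t}|>0$.

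Next, I would apply Lemma~\ref{sec:local-compactness-1-0} with $\Omega = \Omega_{u,t}$ to get the pointwise bound
$$
\kappa_{\Omega_{u,t}}(x) \ge \kappa(|\Omega_{u,t}|) \qquad \text{for every } x\in\R^N,
$$
and in particular taking the infimum over $x \in \Omega_{u,t}$ yields
$$
\inf_{x\in\Omega_{u,t}} \kappa_{\Omega_{u,t}}(x) \ge \kappa(|\Omega_{u,t}|).
$$
Plugging this lower bound into the estimate of Lemma~\ref{sec:local-compactness-1-0-0} produces exactly the claimed inequality.

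There is no real obstacle here, as the two earlier lemmas do all the work; the only thing to be careful about is the finite-measure hypothesis needed to invoke Lemma~\ref{sec:local-compactness-1-0}, which follows automatically from $u \in L^2(\R^N)$ via Chebyshev. The corollary's value lies not in depth of argument but in repackaging the estimate in a form that depends only on $|\Omega_{u,t}|$ through the universal function $\kappa$, which is what will be needed in the subsequent compactness and nonvanishing applications.
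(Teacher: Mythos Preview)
Your proposal is correct and follows essentially the same approach as the paper: the paper's proof simply states that the corollary is a direct consequence of Lemmas~\ref{sec:local-compactness-1-0} and~\ref{sec:local-compactness-1-0-0}, noting that $|\Omega_{u,t}|<\infty$ since $u\in L^2(\R^N)$. Your write-up just makes the Chebyshev step and the chaining explicit.
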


\begin{proof}
This is a direct consequence of Lemmas~\ref{sec:local-compactness-1-0} and \ref{sec:local-compactness-1-0-0}. Here we note that $|\Omega_{u,t}|< \infty$ for $t>0$ since $u \in L^2(\R^N)$.  
\end{proof}

We are now ready to complete the proof of Theorem~\ref{thm-cor-sec:local-compactness-5}, which is a direct consequence of the following result. 

\begin{thm}
\label{sec:local-compactness-4}
Let $j: \R^N \to [0,\infty]$ satisfy $(A1)$ and $(A2)$, and let $\Omega \subset \R^N$ be a measurable subset with $|\Omega|<\infty$. Then $R_\Omega$ is compact as an operator $\cD^j(\R^N) \to L^2(\R^N)$.
\end{thm}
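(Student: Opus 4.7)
The plan is to combine the truncation estimate from Corollary~\ref{sec:local-compactness-1} with the local compactness of Theorem~\ref{sec:local-compactness-2}, using that $|\Omega|<\infty$ to handle the fact that $\Omega$ may still be unbounded. Let $(u_n)_n \subset \cD^j(\R^N)$ be bounded with $\|u_n\| \le C$; the goal is to show that $(R_\Omega u_n)_n$ has an $L^2$-convergent subsequence.

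First, I would establish a uniform truncation estimate: by Chebyshev's inequality, for every $t>0$ and every $n$,
\[
|\Omega_{u_n,t}| \;\le\; \frac{\|u_n\|_{L^2(\R^N)}^2}{t^2} \;\le\; \frac{C^2}{t^2},
\]
so $|\Omega_{u_n,t}| \to 0$ as $t \to \infty$ uniformly in $n$. Since $j$ satisfies $(A2)$, Lemma~\ref{sec:local-compactness-1-0} gives $\kappa(r) \to \infty$ as $r \to 0^+$, and hence Corollary~\ref{sec:local-compactness-1} yields
\[
\sup_{n}\,\|u_n - P_t u_n\|_{L^2(\R^N)} \;\longrightarrow\; 0 \qquad \text{as } t \to \infty.
\]

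Next, I would show that for each fixed $t>0$ the sequence $(R_\Omega P_t u_n)_n$ is relatively compact in $L^2(\R^N)$. Since $\|P_t u_n\| \le \|u_n\| \le C$, the sequence $(P_t u_n)_n$ is bounded in $\cD^j(\R^N)$. Because $|\Omega|<\infty$, for any $\delta>0$ there exists $R>0$ with $|\Omega \setminus B_R|<\delta$, and as $|P_t u_n|\le t$ pointwise we have
\[
\|R_{\Omega \setminus B_R}\, P_t u_n\|_{L^2(\R^N)} \;\le\; t\,|\Omega \setminus B_R|^{1/2} \;\le\; t\,\delta^{1/2}.
\]
Meanwhile $R_\Omega R_{B_R}\, P_t u_n = R_\Omega\bigl(R_{B_R} P_t u_n\bigr)$, and Theorem~\ref{sec:local-compactness-2} implies that $(R_{B_R} P_t u_n)_n$ is relatively compact in $L^2(\R^N)$; since $R_\Omega$ is continuous on $L^2(\R^N)$, so is $(R_\Omega R_{B_R} P_t u_n)_n$. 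Letting $\delta \to 0$ shows that $(R_\Omega P_t u_n)_n$ can be covered by finitely many balls of arbitrarily small radius, i.e.\ it is totally bounded and hence relatively compact.

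Finally, a standard diagonal extraction: for each $k \in \N$ the previous step furnishes a subsequence along which $R_\Omega P_k u_n$ converges in $L^2(\R^N)$; diagonalizing yields a single subsequence, still denoted $(u_n)_n$, such that $R_\Omega P_k u_n$ converges in $L^2(\R^N)$ for every $k$. Given $\eps>0$, the first step lets us pick $k$ with $\sup_n \|u_n - P_k u_n\|_{L^2(\R^N)} < \eps/3$, and Cauchyness of $(R_\Omega P_k u_n)_n$ yields $N$ with $\|R_\Omega P_k u_n - R_\Omega P_k u_m\|_{L^2(\R^N)} < \eps/3$ for $n,m \ge N$. The triangle inequality then gives $\|R_\Omega u_n - R_\Omega u_m\|_{L^2(\R^N)} < \eps$, proving $(R_\Omega u_n)_n$ is Cauchy and hence convergent. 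The main technical point to get right is the interplay between the two truncations: truncating in height via $P_t$ (controlled by $(A2)$ through the killing measure) to gain an $L^\infty$-bound, and truncating in space via $R_{B_R}$ (using $|\Omega|<\infty$) to invoke local compactness; neither truncation alone would suffice.
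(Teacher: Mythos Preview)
Your proof is correct and follows essentially the same approach as the paper: both combine the height-truncation estimate of Corollary~\ref{sec:local-compactness-1} (via the killing measure and $(A2)$) with the local compactness of Theorem~\ref{sec:local-compactness-2}, using $|\Omega|<\infty$ to cut off the unbounded part of $\Omega$. The paper's version is slightly more streamlined --- it directly shows $R_\Omega(M)$ is totally bounded by choosing $t$ and a compact $K\subset\Omega$ (via inner regularity) simultaneously and estimating $\|R_\Omega u - R_K u\|_{L^2}\le t|\Omega\setminus K|^{1/2}+\|u-P_t u\|_{L^2}$, thereby avoiding the diagonal extraction --- but the ingredients and decomposition are identical.
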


\begin{proof}
Let $M \subset \cD^j(\R^N)$ be a bounded set with $C:= \sup \limits_{u \in M}\|u\|$. To show that $R_\Omega(M) \subset L^2(\R^N)$ is relatively compact, we let $\eps>0$ and choose $t>0$ sufficiently large to guarantee that 
$$
\bigl(\kappa(|\Omega_{u,t}|)\bigr)^{-\frac{1}{2}}< \frac{\eps }{2C} \qquad\text{ for all $u\in M$.}
$$ 
This is possible since $\kappa(r) \to \infty$ as $r \to 0$ by Lemma~\ref{sec:local-compactness-1-0} and since 
$$
|\Omega_{u,t}|\leq \frac{\|u\|_{L^2(\R^N)}^2}{t^2}\leq \frac{C^2}{t^2}\qquad \text{for every $u\in M$, $t>0$.}
$$
Moreover, by the inner regularity of Lebesgue measure and since $|\Omega| < \infty$, we may choose a compact set $K \subset \Omega$ with 
$$
t |\Omega \setminus K|^{\frac{1}{2}} \le \frac{\eps}{2}.
$$
By Corollary, \ref{sec:local-compactness-1}, we then have 
\begin{align*}
&\|R_\Omega u - R_K u\|_{L^2(\R^N)} = \|R_{\Omega \setminus K}u \|_{L^2(\R^N)} \le 
\|R_{\Omega \setminus K}P_t u \|_{L^2(\R^N)}+ \|R_{\Omega \setminus K}(u-P_t u) \|_{L^2(\R^N)}\\
&\le  |\Omega \setminus K|^{\frac{1}{2}}\|P_t u\|_{L^\infty(\R^N)} + \|u-P_t u \|_{L^2(\R^N)}
\le t |\Omega \setminus K|^{\frac{1}{2}} + \bigl(\kappa(|\Omega_{u,t}|)\bigr)^{-\frac{1}{2}}\|u\| \le \eps\qquad \text{for $u \in M$.} 
\end{align*}
Hence $R_\Omega(M)$ is contained in the $\eps$-neighborhood of the set $R_K(M)$ in $L^2(\R^N)$. Since $\eps>0$ was chosen arbitrarily and $R_K(M)$ is compact by Theorem~\ref{sec:local-compactness-2}, it follows that $R_\Omega(M)$ is totally bounded in $L^2(\R^N)$. Hence it is relatively compact in $L^2(\R^N)$, as claimed.
\end{proof}

As mentioned above, Theorem \ref{thm-cor-sec:local-compactness-5} is an immediate consequence of Theorem \ref{sec:local-compactness-2}. We finally complete the 

\begin{proof}[Proof of Theorem \ref{sec:vanishing-nonvanishing}]
Let $C:= \sup \limits_{n \in \N}\|u_n\|$, and suppose that alternative (i) does not hold. Then there exists $\eps,\delta>0$ and a subsequence -- still denoted by $(u_n)_n$ --  with the property that
$$
\int_{|u_n| \ge 2 \eps}u_n^2\,dx \ge \delta \qquad \text{for all $n \in \N$.}
$$
Since $|u_n|^2 \le 4(|u_n|-\eps)^2$ on the set $\{|u_n| \ge 2\eps\}$, we deduce that  
$$
\delta \le  4 \int_{|u_n| \ge \eps}(|u_n| -\eps)^2\,dx = 4\|u_n-P_\eps u_n\|_{L^2(\R^N)}^2
$$
where $P_\eps: L^2(\R^N) \to L^2(\R^N)$ is the projection on the convex set $\{u \in L^2(\R^N)\::\: |u| \le \eps\}$ as defined in (\ref{eq:defi-projection}). Let $\Omega_n:= \{x \in \R^N \::\: |u_n(x)| > \eps\}$ and $\kappa_n(x):= \kappa_{\Omega_n}(x)$ for $n \in \N$, $x \in \Omega_n$. By Lemma~\ref{sec:local-compactness-1-0-0}, we then have, for all $n \in \N$, 
$$
\frac{\delta}{4} \le \|u-P_\eps u\|_{L^2(\R^N)}^2 \le \frac{\|u\|^2}{k_n^*} \le 
\frac{C^2}{k_n^*}\qquad  \text{with}\quad k_n^*:= \inf_{x \in \Omega_n}\kappa_n(x). 
$$
Hence the sequence $(k_n^*)_n$ remains bounded. Let $x_n \in \Omega_n$ be chosen such that 
\begin{equation}
  \label{eq:kappa-n-x-n-est}
\kappa_n(x_n) \le k_n^* +1, 
\end{equation}
and let $v_n:= x_n * u_n$. Then we have 
\begin{equation}
  \label{eq:kappa-n-lower-bound}
\kappa_n(x) +1 \ge \int_{\R^N \setminus \Omega_n} j(x_n-y)\,dy = \int_{\{|v_n| < \eps\}}j(y)\,dx = \int_{\R^N} j_n(y)\,dy \qquad \text{for all $x\in \R^N$, $n \in \N$}
\end{equation}
with $j_n:= j \,1_{\{|v_n| < \eps\}}: \R^N \to [0,\infty]$. Since $(v_n)_n$ is bounded in $\cD^j(\R^N)$, we may, by Theorem \ref{sec:local-compactness-2}, pass to a subsequence with  
$$
v_n \weak u\quad \text{in $\cD^j(\R^N)$},\qquad 
v_n \to u \quad \text{in $L^2_{loc}(\R^N)$},\qquad \text{and}\qquad
v_n \to u\quad \text{a.e. in $\R^N$.}
$$
Suppose by contradiction that $u=0$. Then we have $j_n \to j$ a.e. in $\R^N$. Therefore (\ref{eq:kappa-n-lower-bound}) and Fatou's Lemma imply that 
$$
\liminf_{n \to \infty} \kappa_n(x_n) \ge \int_{\R^N} j(y)\,dy = \infty.
$$
This is a contradiction, as the sequence $\kappa_n(x_n)$ is bounded by (\ref{eq:kappa-n-x-n-est}) and since $(k_n^\ast)_n$ is bounded. It follows that $u \not = 0$, as claimed.
\end{proof}

\section{Application to a maximization problem}
\label{sec:appl-maxim-probl}

The present section is devoted to the proof of Theorem~\ref{max-value-attained}.
Throughout this section, we assume that 
$j: \R^N \to [0,\infty]$ satisfies $(A1)$ and $(A2)$. 

We first need the following preliminary lemma.

\begin{lemma}
\label{density-compact-support}
The subspace of functions in $\cD^j(\R^N)$ with bounded support is dense in $\cD^j(\R^N)$. 
\end{lemma}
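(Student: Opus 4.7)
The plan is to approximate $u\in \cD^j(\R^N)$ by cut-offs $u_R = \eta_R u$, where $\eta_R\in C_c^\infty(\R^N)$ satisfies $\eta_R\equiv 1$ on $B_R$, $\eta_R\equiv 0$ on $\R^N\setminus B_{2R}$, $0\le \eta_R\le 1$, and $|\nabla \eta_R|\le C/R$ for some $R$-independent constant $C$. Each $u_R$ has bounded support (contained in $B_{2R}$), so the point is to verify that $u_R\in \cD^j(\R^N)$ and that $\|u_R-u\|\to 0$ as $R\to\infty$.

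To show $u_R\in\cD^j(\R^N)$ I would split the difference in the standard way,
\[
u_R(x)-u_R(y) = \eta_R(x)\bigl(u(x)-u(y)\bigr) + u(y)\bigl(\eta_R(x)-\eta_R(y)\bigr),
\]
so that $(u_R(x)-u_R(y))^2$ is controlled by $2(u(x)-u(y))^2$ plus $2u(y)^2(\eta_R(x)-\eta_R(y))^2$. The key technical observation, which I would isolate as a small lemma, is the uniform bound
\[
(\eta_R(x)-\eta_R(y))^2 \le K \min\{1,|x-y|^2\} \qquad \text{for all $x,y\in \R^N$ and all $R\ge 1$,}
\]
which follows by combining the trivial estimate $|\eta_R(x)-\eta_R(y)|\le 2$ with the Lipschitz bound $|\eta_R(x)-\eta_R(y)|\le (C/R)|x-y|$. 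Integrating this against $j(x-y)$ and using $u\in L^2(\R^N)$ together with $\min\{1,|\cdot|^2\}j\in L^1(\R^N)$ from (A1), one obtains that the second term contributes a finite quantity bounded by $2K\|u\|_{L^2(\R^N)}^2\int_{\R^N}\min\{1,|z|^2\}j(z)\,dz$, while the first term is controlled by $\cE_j(u,u)<\infty$.

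For the convergence, a parallel splitting gives
\[
(u-u_R)(x)-(u-u_R)(y) = (1-\eta_R(x))(u(x)-u(y)) - u(y)(\eta_R(x)-\eta_R(y)),
\]
so that $\cE_j(u-u_R,u-u_R)$ is bounded by the sum of the integrals of $(1-\eta_R(x))^2(u(x)-u(y))^2 j(x-y)$ and $u(y)^2(\eta_R(x)-\eta_R(y))^2 j(x-y)$ over $\R^N\times\R^N$. In each integrand the $\eta_R$-factor tends to $0$ pointwise as $R\to\infty$ (for fixed $x,y$, eventually $|x|,|y|<R$), while the integrand is dominated by $(u(x)-u(y))^2 j(x-y)$ respectively $Ku(y)^2\min\{1,|x-y|^2\}j(x-y)$, both of which are integrable on $\R^N\times\R^N$. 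Hence the Lebesgue dominated convergence theorem yields $\cE_j(u-u_R,u-u_R)\to 0$, and the $L^2$-convergence $\|u-u_R\|_{L^2(\R^N)}\to 0$ is immediate from $|u-u_R|\le |u|\mathbf 1_{\R^N\setminus B_R}$.

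The only mildly delicate step is securing the uniform bound on $(\eta_R(x)-\eta_R(y))^2$; once that is in hand, the rest is a routine application of dominated convergence. I do not expect any serious obstacle, and this lemma sets up nicely for the subsequent maximization argument, where it will be used to approximate maximizing sequences by compactly supported functions.
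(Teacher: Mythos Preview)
Your proposal is correct and follows essentially the same route as the paper's proof: the same cut-off scheme $\eta_R\equiv 1$ on $B_R$, $\eta_R\equiv 0$ outside $B_{2R}$ with Lipschitz constant $O(1/R)$, the same product-rule splitting of the difference, the same uniform bound $|\eta_R(x)-\eta_R(y)|\le \min\{1,|x-y|\}$ for $R\ge 1$ to produce an integrable majorant via (A1), and the same appeal to dominated convergence. The only cosmetic differences are that the paper takes Lipschitz rather than $C_c^\infty$ cut-offs and organizes the dominated convergence in two nested steps (first showing $K_R(x):=\int(\eta_R(x)-\eta_R(y))^2 j(x-y)\,dy$ is uniformly bounded and tends to zero pointwise, then integrating against $u^2$), whereas you do it in one pass over $\R^N\times\R^N$.
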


\begin{proof}
For $R>0$, let $\phi_R: \R^N \to \R$ be Lipschitz functions with $0 \le \phi_R \le 1$, $\phi_R \equiv 1$ on $B_R$, $\phi_R \equiv 0$ on $\R^N \setminus B_{2R}$ and 
$$
|\phi_R(x) -\phi_R(y)| \le \frac{|x-y|}{R} \qquad \text{for $x,y \in \R^N$.}
$$
Moreover, let $\psi_R:= 1 -\phi_R$ for $R>0$, and let $u \in \cD^j(\R^N)$. We claim that 
\begin{equation}
  \label{eq:density-proof-necessary}
\cE_j(u- u \phi_R,u- u \phi_R)= \cE_j(u \psi_R,u \psi_R) \to 0 \qquad \text{as $R \to \infty$.}
\end{equation}
Indeed, we have 
\begin{align*}
&\cE_j(u \psi_R,u \psi_R)= \frac{1}{2}\int_{\R^N}\int_{\R^N} [u(x)\psi_R(x)-u(y)\psi_R(y)]^2j(x-y)\ dydx\\
        &= \frac{1}{2}\int_{\R^N}\int_{\R^N} \bigl[u(x)\bigl(\psi_R(x)-\psi_R(y)\bigr)+ \psi_R(y)\bigl(u(x)-u(y)\bigr)\bigr]^2j(x-y)\ dydx\\
	&\leq \int_{\R^N}u(x)^2 K_R(x) dx+ \int_{\R^N}\int_{\R^N} \psi_R(y)^2[(u(x)-u(y)]^2j(x-y)\ dydx  
\end{align*}
with 
$$
K_R(x)= \int_{\R^N}[\psi_R(x)-\psi_R(y)]^2j(x-y)\ dy= \int_{\R^N}[\phi_R(x)-\phi_R(y)]^2j(x-y)\ dy \quad \text{for $x \in \R^N$.}
$$
Since $|\psi_R| \le 1$ and $\psi_R \to 0$ pointwise on $\R^N$ as $R \to \infty$, Lebesgue's theorem implies that 
$$
\int_{\R^N}\int_{\R^N} \psi_R(y)^2[(u(x)-u(y)]^2j(x-y)\ dydx \to 0 \qquad \text{as $R \to \infty$.}
$$
Moreover, since 
$$
|\phi_R(x)-\phi_R(y)| \le \min \{1, \frac{|x-y|}{R}\} \le \min \{1,|x-y|\}
\qquad \text{for $x,y \in \R^N$, $R \ge 1$,}
$$
it follows from $(A1)$ and Lebesgue's theorem that 
$$
|K_R(x)| \le C_j:= \int_{\R^N} \min \{1,|z|^2\}j(z)\,dz < \infty \qquad \text{for $x \in \R^N$, $R \ge 1$}
$$
and 
$$
K_R(x) \to 0 \qquad \text{as $R \to \infty\quad $ for every $x \in \R^N$.} 
$$
Applying Lebesgue's theorem again, we find that
$$
\int_{\R^N}u(x)^2 K_R(x) dx \to 0 \qquad \text{as $R \to \infty$.}
$$
We thus obtain (\ref{eq:density-proof-necessary}). Since also 
$$
\|u- u \phi_R\|_{L^2(\R^N)}^2 = \|u \psi_R\|_{L^2(\R^N)}^2 \to 0 \qquad \text{as $R \to \infty$,}
$$
we conclude that 
$$
\|u- u \phi_R\| \to 0 \qquad \text{as $R \to \infty$.}
$$
This shows that functions with compact support are dense in $\cD^j(\R^N)$. 
 \end{proof}

 \begin{remark}
 \label{density-C-infty-c}
Combining Lemma~\ref{density-compact-support} with \cite[Proposition 4.1]{JW17}), we deduce that the space $C_c^\infty(\R^N)$ is dense in $\cD^j(\R^N)$. However, Lemma~\ref{density-compact-support} is sufficient for our purposes here. 
 \end{remark}

Next, let $F \in C(\R)$ satisfy properties (F1)--(F3).  As in the introduction, we consider the integral functional 
$$
\Phi: L^2(\R^N) \to \R, \qquad \Phi(u)= \int_{\R^N}F(u)\,dx,
$$
and the maximization problem associated with 
$$
m_{F,\R^N}:= \sup_{u \in S } \Phi(u),
$$
where $S(\R^N):= \{u \in \cD^j(\R^N) \::\: \|u\| =1\}$ is the unit sphere in $\cD^j(\R^N)$. For simplicity, we write $m_F$ and $S$ in place of $m_{F,\R^N}$ and $S(\R^N)$ in the following. 

Clearly, we have $m_{F}>0$ if and only if $F\not\equiv 0$. Moreover, by (F1) we have
$$
\Phi(u)=  \int_{\R^N}F(u)\,dx \le c_\infty \|u\|_{L^2(\R^N)}^2 \le c_\infty
\qquad \text{for $u \in S$}
$$
and therefore $m_{F}\le c_\infty<\infty$. From the same bound, it also follows by a classical argument that $\Phi$ is continuous on $L^2(\R^N)$.  We also need the following lemma.

\begin{lemma}
\label{sec:comp-stat-2}
If $w \in \cD^j(\R^N)$ satisfies $\|w\| \le 1$, then we have $\Phi(w) \le m_F \|w\|^2$. Moreover, if $w \not = 0$ and 
$\phi(w)= m_F \|w\|^2$, then 
$\tilde w = \frac{w}{\|w\|}$ is a maximizer of $\Phi$ on $S$.  
\end{lemma}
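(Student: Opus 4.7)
The plan is to extract from (F3) the pointwise scaling inequality
\[
F(\lambda t) \le \lambda^2 F(t) \qquad \text{for all } t \in \R \text{ and } \lambda \in [0,1],
\]
and then apply it with $\lambda = \|w\|$. Once this inequality is established, both claims of the lemma follow by a short integration and a trivial equality-case analysis, so the argument is more bookkeeping than substance.

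To derive the scaling inequality, I would split into cases. For $t>0$ and $\lambda\in(0,1]$ one has $0<\lambda t\le t$, so the monotonicity of $s\mapsto F(s)/s^2$ on $[0,\infty)$ stated in (F3) gives $F(\lambda t)/(\lambda t)^2\le F(t)/t^2$, which rearranges to the desired bound; the case $t<0$ is symmetric using the monotonicity on $(-\infty,0]$. The boundary cases $\lambda=0$ or $t=0$ reduce to the assertion $F(0)\le 0$, which holds because (F2) forces $F(0)=0$ (as $F(t)/t^2\to 0$ and $F$ is continuous). Note that the same monotonicity argument gives $F\ge 0$ on $\R$, so the integrals below are unambiguously defined.

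Given the scaling inequality, the first claim is immediate: for $w\ne 0$ with $\|w\|\le 1$, put $\lambda:=\|w\|\in(0,1]$ and $\tilde w:=w/\lambda\in S$, and integrate the pointwise inequality to obtain
\[
\Phi(w)=\int_{\R^N}F(\lambda\tilde w)\,dx\le \lambda^2\int_{\R^N}F(\tilde w)\,dx =\lambda^2\Phi(\tilde w)\le \lambda^2 m_F= m_F\|w\|^2,
\]
while for $w\equiv 0$ both sides vanish. For the second claim, suppose $w\ne0$ and $\Phi(w)=m_F\|w\|^2$. Then every inequality in the displayed chain must be an equality; in particular $\lambda^2\Phi(\tilde w)=\lambda^2 m_F$, and since $\lambda>0$ we conclude $\Phi(\tilde w)=m_F$, so $\tilde w\in S$ is a maximizer.

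I do not anticipate any genuine difficulty here: the only subtle point is that (F2) (rather than (F3) alone) is what ensures $F(0)=0$, so that the scaling estimate is not vacuous on the zero set of $w$. The lemma will be used in the subsequent proof of Theorem~\ref{max-value-attained} to pass from a limit of a maximizing sequence to an actual maximizer on $S$, so it is precisely this combination "inequality with characterization of the equality case" that is needed downstream.
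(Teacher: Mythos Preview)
Your proof is correct and follows essentially the same approach as the paper. The paper writes the scaling argument with $t=1/\|w\|\ge 1$ and the inequality $F(tw)\ge t^2 F(w)$, whereas you use the equivalent form $F(\lambda\tilde w)\le\lambda^2 F(\tilde w)$ with $\lambda=\|w\|\le 1$; the chain of inequalities and the equality-case analysis are identical in content.
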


\begin{proof}
The claim is obvious if $w = 0$. Hence we assume that $w \not = 0$, and we let $t= \frac{1}{\|w\|}\ge 1$. Then we have $\|t w\|=1$ and therefore, by the definition of $m_F$ and (F3),  
\begin{equation}
  \label{eq:equality-case}
m_F \ge \Phi(t w)= \int_{\R^N} F(t w)\,dx \ge t^2 \int_{R^N} F(w)\,dx = t^2\Phi(w)= \frac{\Phi(w)}{\|w\|^2}.
\end{equation}
Hence $\Phi(w) \le m_F \|w\|^2$. Moreover, if $w \not = 0$ and 
$\phi(w)= m_F \|w\|^2$, then equality holds in (\ref{eq:equality-case}) and therefore 
$\Phi(t w)=m_F$. Hence $tw = \frac{w}{\|w\|}$ is a maximizer of $\Phi$ on $S$. 
\end{proof}

We may now complete the 

\begin{proof}[Proof of Theorem \ref{max-value-attained}]
The claim is obvious if $m_F=0$, so we assume that $m_F>0$ in the following. Let $(u_n)_n \subset S$ be a maximizing sequence, i.e., we have $\|u_n\|=1$ for all $n \in \N$ and $\Phi(u_n) \to m_F$ as $n \to \infty$. If 
\begin{equation}
  \label{eq:first-alt-proof-max}
\lim \limits_{n \to \infty} \int_{|u_n| \ge \eps}u_n^2\,dx = 0\qquad \text{for every $\eps>0$,}
\end{equation}
we deduce from (F1) that 
$$
\Phi(u_n)\le c_\eps \int_{|u_n| < \eps}|u_n|^2\,dx + {c_\infty \,} \int_{|u_n| \ge \eps}u_n^2\,dx  \le c_\eps \|u_n\|_{L^2}^2 + o(1)
\le c_\eps + o(1)\quad \text{as $n \to \infty$}
$$
for every $\eps>0$, where
\begin{equation}
  \label{eq:def-c-eps}
c_\eps:= \sup_{0< |t| \le \eps} \frac{|F(t)|}{t^2} \to 0 \qquad \text{as $\eps \to 0$}
\end{equation}
by (F2). Hence $\Phi(u_n) \to 0$ as $n \to \infty$; a contradiction. 
We thus conclude that (\ref{eq:first-alt-proof-max}) does not hold, so by Theorem~\ref{sec:vanishing-nonvanishing} there exists a sequence $(x_n)_n \subset \R^N$ such that, after passing to a subsequence,  
$$
x_n * u_n \weakto \psi \not = 0 \qquad \text{in $\cD^j(\R^N)$.}
$$
By translation invariance of $\Phi$ and $S$, we may replace the sequence $(u_n)_n$ by $(x_n*u_n)_n$, which gives that 
\begin{equation}
  \label{eq:translated-weak}
u_n \weakto \psi \not = 0 \qquad \text{in $\cD^j(\R^N)$.}
\end{equation}
Consequently, since $\|u_n\|=1$ for all $n$, we have 
$$
0 < \|\psi\| \le  1 \qquad \text{and}\qquad \|u_n-\psi\|^2= 1-\|\psi\|^2 + o(1) \qquad \text{as $n \to \infty$.} 
$$
Passing to a subsequence, we may thus assume that $\|u_n- \psi\|< 1$ for all $n \in \N$. 
We claim that
\begin{equation}
  \label{eq:claim-psi-phi}
\Phi(\psi)\ge m_F \|\psi\|^2.  
\end{equation}
Suppose by contradiction that 
$$
\Phi(\psi) < m_F \|\psi\|^2-\delta \qquad \text{for some $\delta >0$.}
$$
Making $\delta$ smaller if necessary, we may assume that $\delta < \min \{1,\|\psi\|^2\}$. Since functions with bounded support are dense in $\cD^j(\R^N)$ by Lemma~\ref{density-compact-support} and $\Phi$ is continuous on $\cD^j(\R^N) \subset L^2(\Omega)$, there exists $\phi \in \cD^j(\R^N)$ with bounded support and such that 
\begin{align}
&\Phi(\psi)-\frac{\delta}{2} < \Phi(\phi)< m_F \|\phi\|^2-\delta, \label{approx-compact-supp1}\\
&\|\phi-\psi\|<\delta_1:= \frac{\delta}{16\max \{1,m_F\}}\leq 1, \label{approx-compact-supp2}\\
&\text{and}\qquad \|\phi\|\le 2.\label{approx-compact-supp4}
\end{align}

We set $w_n:= u_n- \phi$ for $n \in \N$. Since $\|u_n-\psi\|<1$ for all $n \in \N$ and $\|\phi\| \le 2$, we have  
\begin{equation}
  \label{eq:tau-ineq}
(\|u_n-\psi\|+\tau)^2 \le \|u_n-\psi\|^2 + 3 \tau \quad \text{and}\quad (\|\phi\|-\tau)^2 \ge \|\phi\|^2 - 4\tau 
\end{equation}
for $\tau \in (0,1)$ and $n \in \N$. Consequently, 
\begin{align}
\|w_n\|^2 &\le \Bigl(\|u_n-\psi\|+\|\psi-\phi\|\Bigr)^2 \le \Bigl(\|u_n-\psi\|+\delta_1\Bigr)^2 \nonumber\\
&\le \|u_n-\psi\|^2 +3 \delta_1 = 1-\|\psi\|^2 +3 \delta_1 + o(1) \quad\text{as $n\to\infty$.}\label{w-n-norm-ineq}
\end{align}
Since $3 \delta_1 < \delta < \|\psi\|^2$,  we may pass to a subsequence such that $\|w_n\| \le 1$ for all $n \in \N$, which by Lemma~\ref{sec:comp-stat-2} implies that 
\begin{equation}
\label{w-n-ineq}
\Phi(w_n) \le m_F \|w_n\|^2 \qquad \text{for all $n \in \N$.}
\end{equation}
It also follows from (\ref{eq:tau-ineq}) and (\ref{w-n-norm-ineq}) that
\begin{align}
\|w_n\|^2 & \le 1-\|\psi\|^2 +3 \delta_1 + o(1) \le 1 -(\|\phi\|-\delta_1)^2
+3 \delta_1 +o(1)\notag\\
&\le 1 -\|\phi\|^2 + 7 \delta_1 +o(1) \quad\text{as $n\to\infty$.}\label{w-n-ineq2}
\end{align}
Next, let $M:= B_R(0)$, 
where $R>0$ is chosen sufficiently large to guarantee that $\supp\, \phi \subset M$.
We write 
$$
\Phi= \Phi_1+ \Phi_2
$$
with 
$$
\Phi_1, \Phi_2: L^2(\R^N) \to \R, \quad \Phi_1(u)=\Phi(u 1_M)= \int_{M}F(u)\,dx,\quad \Phi_2(u)=\Phi_2(u 1_{\R^N \setminus M})= \int_{\R^N \setminus M}F(u)\,dx.
$$
Since $F$ is nonnegative, we have 
$$
\Phi_1(v) \le \Phi(v) \quad \text{and}\quad \Phi_2(v) \le \Phi(v) \qquad \text{for all $v \in L^2(\R^N)$.}
$$
Moreover, since $u_n 1_M \to \psi 1_M$ in $L^2(\R^N)$ by Theorem \ref{sec:local-compactness-2},  
$$
\Phi_1(u_n) = \Phi_1(\psi) + o(1) \le  \Phi(\psi) + o(1) \le \Phi(\phi) + \frac{\delta}{2}+ o(1)\quad\text{as $n\to\infty$.}
$$
Here we used (\ref{approx-compact-supp1}) in the last inequality. Since $u_n= w_n$ on $\R^N \setminus M$, we also have 
$$
\Phi_2(u_n)= \Phi_2(w_n) \le \Phi(w_n)
$$
Combining these estimates, we find, by \eqref{approx-compact-supp1}, \eqref{w-n-ineq}, and \eqref{w-n-ineq2},
\begin{align*}
\Phi(u_n) &= \Phi_1(u_n)+ \Phi_2(u_n) \le \Bigl(\Phi(\phi) + \frac{\delta}{2}+ o(1)\Bigr) + \Phi(w_n) \le \Bigl( m_F \|\phi\|^2 - \frac{\delta}{2}+ o(1)\Bigr)+ m_F \|w_n\|^2\\ 
&\le \Bigl( m_F \|\phi\|^2 -\frac{\delta}{2}+ o(1)\Bigr)+ m_F \Bigl(1 -\|\phi\|^2 + 7 \delta_1 +o(1)\Bigr)\\
&= m_F  -\frac{\delta}{2} + 7 m_F \delta_1 + o(1) \le m_F- \frac{\delta}{16}+o(1)\quad\text{as $n\to\infty$}
\end{align*}
since $7 \delta_1 m_F \le \frac{7\delta}{16}$ by \eqref{approx-compact-supp2}. We conclude that 
$$
m_F = \lim_{n \to \infty} \Phi(u_n) \le m_F -\frac{\delta}{16}.
$$
This is a contradiction, and hence (\ref{eq:claim-psi-phi}) holds. Since $\|\psi\| \le 1$, Lemma~\ref{sec:comp-stat-2} now implies that $u:= \frac{\psi}{\|\psi\|}$ is a maximizer of $\Phi$ on $S$.
\end{proof}

\section{The Poincar\'e inequality}
\label{sec:poincare-inequality}

For the proof of Proposition \ref{poincare} we need the following Lemma, which is a simple variant of \cite[Lemma 10]{DK11}. We include the proof for the convenience of the reader. Recall that, for a nonnegative even function $q \in L^1(\R^N)$, 
the corresponding bilinear form $\cE_q(u,v)$ is given by (\ref{bilinear}) with $q$ in place of $j$.

	\begin{lemma}\label{iterationlemma}
		Let $q\in L^1(\R^N)$ be a nonnegative even function. Then for all measurable functions $u:\R^N\to \R$ we have
			\[
			\cE_{q\ast q}(u,u)\leq 4\|q\|_{L^1(\R^N)} \cE_{q}(u,u).
			\]
	\end{lemma}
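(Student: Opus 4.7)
The plan is to unfold the convolution inside $\cE_{q\ast q}$ into a triple integral and then apply the elementary inequality $(a+b)^{2}\leq 2a^{2}+2b^{2}$ to a triangle-type decomposition of the increment $u(x)-u(y)$.

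First I would write out the definition
\[
\cE_{q\ast q}(u,u)=\frac{1}{2}\int_{\R^{N}}\int_{\R^{N}}(u(x)-u(y))^{2}\,(q\ast q)(x-y)\,dx\,dy,
\]
and rewrite the convolution using the substitution $w\mapsto x-z$ (together with the evenness of $q$, if needed) as
\[
(q\ast q)(x-y)=\int_{\R^{N}}q(x-z)\,q(z-y)\,dz,
\]
so that $\cE_{q\ast q}(u,u)$ becomes an iterated integral in $(x,y,z)$. All integrands are nonnegative, so Tonelli's theorem justifies any reordering of the variables.

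Next I would insert the telescoping identity $u(x)-u(y)=(u(x)-u(z))+(u(z)-u(y))$ and apply $(a+b)^{2}\leq 2a^{2}+2b^{2}$ pointwise, which gives
\[
\cE_{q\ast q}(u,u)\leq \int_{\R^{N}}\!\int_{\R^{N}}\!\int_{\R^{N}}\bigl[(u(x)-u(z))^{2}+(u(z)-u(y))^{2}\bigr]q(x-z)\,q(z-y)\,dz\,dx\,dy.
\]
In the first summand the variable $y$ appears only through the factor $q(z-y)$, whose integration over $y\in\R^{N}$ yields $\|q\|_{L^{1}(\R^{N})}$; the remaining double integral in $(x,z)$ is precisely $2\cE_{q}(u,u)$. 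By the symmetric role of $x$ and $y$, the second summand contributes the same amount. Adding the two contributions gives the claimed bound with constant $4\|q\|_{L^{1}(\R^{N})}$.

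I do not anticipate a genuine obstacle: the proof reduces to one change of variables, a triangle-type inequality, and Fubini--Tonelli. The only points to keep an eye on are that the evenness of $q$ is used implicitly to write the convolution in the symmetric form above, and that the inequality holds trivially when $\cE_{q}(u,u)=\infty$, so there is no integrability issue to handle separately.
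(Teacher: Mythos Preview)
Your proposal is correct and follows essentially the same approach as the paper's proof: both unfold the convolution into a triple integral, apply the pointwise inequality $(u(x)-u(y))^{2}\leq 2(u(x)-u(z))^{2}+2(u(z)-u(y))^{2}$, and then integrate out the free variable using $\int_{\R^{N}}q(\cdot-w)\,dw=\|q\|_{L^{1}(\R^{N})}$. The only cosmetic difference is that the paper packages the key inequality as $g(x,y)\leq 2g(x,z)+2g(y,z)$ with $g(x,y)=(u(x)-u(y))^{2}$, whereas you state it via the telescoping identity and $(a+b)^{2}\leq 2a^{2}+2b^{2}$.
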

	\begin{proof}
		Let $u$ be as stated and denote $g(x,y)=(u(x)-u(y))^2$ for $x,y\in \R^N$. Note that we have
		\[
		0\leq g(x,y)=g(y,x)\leq 2g(x,z)+2g(y,z)\qquad\text{ for all $x,y,z\in \R^N$.}
		\]
		By Fubini's theorem we have
		\begin{align*}
		\int_{\R^N}\int_{\R^N}&g(x,y) (q\ast q)(x-y)\ dxdy=	\int_{\R^N}\int_{\R^N} \int_{\R^N} g(x,y) q(x-z)q(y-z)\ dzdxdy\\
		&\leq	2\int_{\R^N}\int_{\R^N} \int_{\R^N} [g(x,z)+g(y,z)] q(x-z)q(y-z)\ dzdxdy\\
		&= 4\int_{\R^N}\int_{\R^N}g(x,z)q(x-z) \int_{\R^{N}}q(y-z)\ dydzdx= 4\|q\|_{L^1(\R^N)}\int_{\R^N}\int_{\R^N}g(x,z)q(x-z)\ dxdz.
		\end{align*}
	\end{proof}

We may now complete the 

\begin{proof}[Proof of Proposition \ref{poincare}]
The first part of the argument follows exactly the lines of \cite[Proof of Lemma 2.7]{FKV13}. By assumption (A1), we have 
$q:=\min\{1,j\} \in L^1(\R^N) \cap L^\infty(\R^N)$, whereas
\begin{equation}
  \label{eq:est-j-q}
	\cE_j(u,u)\geq \cE_q(u,u)=\frac{1}{2}\int_{\R^N}\int_{\R^N}(u(x)-u(y))^2q(x-y)\ dxdy \quad \text{ for all $u\in \cD^j(\Omega)$.}
\end{equation}
For $m \in \N$ we consider the $2^m$-fold convolution
	\begin{equation}\label{def:q}
	q_m := \underbrace{q\ast\ldots\ast q}_{\text{$2^m$-times}}  \qquad \text{in $L^1(\R^N)$.} 
	\end{equation}
By Young's inequality, we have $\|q_m\|_{L^1(\R^N)}\le \|q\|_{L^1(\R^N)}^{2^m}$. Moreover, $q_m$ is bounded and continuous for $m \in \N$, and 
$$
q_1(0)= \int_{\R^N}q^2(y)\ dy>0
$$
since $j \not \equiv 0$ and therefore $q \not \equiv 0$. By continuity, we have 
$\inf_{B_{\delta}(0)}q_1>0$ for some $\delta>0$, and then standard properties of convolution yield that $\inf_{B_{2^m\delta}(0)}q_m>0$. 
For $a>0$, we put $\Omega_a:= (-a,a)\times \R^{N-1} \subset \R^N$ in the following. Without loss of generality, we may assume that $\Omega= \Omega_a$ for some fixed $a>0$. We then fix $m \in \N$ with $2^m\delta >2 a$. Then $|B_{2^m\delta}(0)\setminus \Omega_{2a}|>0$ and therefore 
$$
	\int_{\R^N \setminus \Omega_a} q_{m}(x-y)\ dy= \int_{\{z_1<-x_1-a\}\cup \{z_1>-x_1+a\}}\!\!\!\! q_{m}(z)\ dz\geq \int_{\R^N \setminus \Omega_{2a}} q_{m}(z)\ dz=:C_{a,1}>0 \quad \text{for $x \in \Omega_a$.}
$$
	Thus, by Lemma \ref{iterationlemma} and (\ref{eq:est-j-q}), we have that	\begin{align*}
	C_{a,1}\int_{\Omega_a} u^2(x) \ dx  &\leq \int_{\Omega_a}u^2(x) \int_{\R^N \setminus \Omega_a} q_{m}(x-y)\ dy \le \cE_{q_m}(u,u) \le \Bigl(4^{2m}\prod_{k=0}^{m-1}\|q_{k}\|_{L^1(\R^N)}\Bigr) \cE_q(u,u)\\
&\le \Bigl(4^{2m}\prod_{k=0}^{m-1}\|q_{k}\|_{L^1(\R^N)}\Bigr) \cE_j(u,u) \qquad\text{for every  $u\in \cD^j(\Omega_a)$.}
	\end{align*}
Consequently, (\ref{eq:poincare-first-claim}) follows with with $C_a=4^{-2m}C_{a,1}\Bigl(\,\prod \limits_{k=0}^{m-1}\|q_{k}\|_{L^1(\R^N)}\Bigr)^{-1}$.
	To see that $\liminf \limits_{a \to 0^+} \, C_a \ge \int_{\R^N}j(z)\ dz$, it is enough to note that, similarly as above, 
$$
\cE_j(u,u)\geq \int_{\Omega_a}u^2(x)\int_{\R^N \setminus \Omega_a}j(x-y)\ dydx \geq \tilde C_a \int_{\Omega_a}u^2(x)\,dx \quad \text{for $u\in \cD^j(\Omega_a)$}$$
with $\tilde C_a:=   
\int_{\R^N \setminus \Omega_{2a}}j(z) dz$, whereas $\lim\limits_{a\to 0^+} \tilde C_a=\int_{\R^N}j(z)\ dz$ by monotone convergence.
\end{proof}

\bibliographystyle{amsplain}

\begin{thebibliography}{10}


%

%

\bibitem{BV15} C.~Bucur and E.~Valdinoci, \emph{Nonlocal diffusion and applications}, Springer, Unione Matematica Italiana, Bologna, 2016.

\bibitem{CW18} H.~Chen and T.~Weth, \emph{The Dirichlet Problem for the Logarithmic Laplacian},
preprint (2018), available online at \url{https://arxiv.org/abs/1710.03416}

\bibitem{CP17} E.~Correa and A.~de~Pablo, \emph{Nonlocal operators of order near zero}, J. Math. Anal. Appl. \textbf{461} (2018), 837--867.


\bibitem{DK11} B.~Dyda and M.~Kassmann, \emph{Comparability and regularity estimates for symmetric nonlocal Dirichlet forms}, preprint (2011), available online at \url{https://arxiv.org/abs/1109.6812}.






%
%


\bibitem{FKV13}
M.~Felsinger, M.~Kassmann and P.~Voigt, \emph{The Dirichlet problem for nonlocal operators},  Math. Z. \textbf{279} (2015), 779--809.




\bibitem{JW14}
S.~Jarohs and T.~Weth, \emph{Symmetry via antisymmetric maximum principles in nonlocal problems of variable order}, Ann. Mat. Pura Appl. (4) \textbf{195.1} (2016), 273--291.



\bibitem{JW17}
S.~Jarohs and T.~Weth, \emph{On the strong maximum principle for nonlocal operators}, to appear in Math. Z., preprint available online at \url{https://arxiv.org/abs/1702.08767}.



\bibitem{KM17} M.~Kassmann and A.~Mimica, \emph{Intrinsic scaling properties for nonlocal operators}, J. Eur. Math. Soc. \textbf{19.4} (2017), 983--1011.

\bibitem{L84} P.~L.~Lions, \emph{The concentration-compactness principle in the calculus of
	variations. {T}he locally compact case. {II}}, Ann. Inst. H. Poincar\'{e} Anal. Non Lin\'{e}aire \textbf{1.4} (1984), 223--283.

%

\bibitem{Willem} M.~Willem, \emph{Minimax theorems}, Birkh\"{a}user Boston, Inc., Boston, MA, 1996.
\end{thebibliography}

\end{document}